\DeclareSymbolFont{rsfs}{U}{rsfs}{m}{n}
\DeclareSymbolFontAlphabet{\mathscrsfs}{rsfs}
\numberwithin{equation}{section}
\newtheoremstyle{myexample} 
    {\topsep}                    
    {\topsep}                    
    {\rm }                   
    {}                           
    {\bf }                   
    {.}                          
    {.5em}                       
    {}  
\newtheoremstyle{myremark} 
    {\topsep}                    
    {\topsep}                    
    {\rm}                        
    {}                           
    {\bf}                        
    {.}                          
    {.5em}                       
    {}  
\newtheorem{claim}{Claim}[section]
\newtheorem{lemma}[claim]{Lemma}
\newtheorem{fact}[claim]{Fact}
\newtheorem{theorem}{Theorem}
\newtheorem{corollary}[claim]{Corollary}
\newtheorem{definition}[claim]{Definition}
\theoremstyle{myremark}
\newtheorem{remark}{Remark}[section]
\theoremstyle{myremark}
\theoremstyle{myexample}
\newtheorem{example}[remark]{Example}
\def\sT{{\sf T}}
\def\<{\langle}
\def\>{\rangle}
\def\prob{{\mathbb P}}
\def\integers{{\mathbb Z}}
\def\naturals{{\mathbb N}}
\def\E{{\mathbb E}} 
\def\de{{\rm d}}
\def\sTV{\mbox{\tiny\rm TV}}
\def\sUB{\mbox{\tiny\rm UB}}
\def\reals{\mathbb{R}}
\def\normal{{\sf N}}
\def\cA{{\mathcal{A}}}
\def\cB{{\mathcal{B}}}
\def\cG{{\mathcal{G}}}
\def\cH{{\mathcal{H}}}
\def\cI{{\mathcal{I}}}
\def\cJ{{\mathcal{J}}}
\def\cP{{\mathcal{P}}}
\def\cL{{\mathcal{L}}}
\def\cZ{{\mathcal{Z}}}
\def\bg{{\boldsymbol g}}
\def\bB{{\boldsymbol B}}
\def\bM{{\boldsymbol M}}
\def\bE{{\boldsymbol E}}
\def\by{{\boldsymbol y}}
\def\bT{{\boldsymbol T}}
\def\bQ{{\boldsymbol Q}}
\def\bG{{\boldsymbol G}}
\def\bD{{\boldsymbol D}}
\def\bI{{\boldsymbol I}}
\def\bx{{\boldsymbol x}}
\def\bT{{\boldsymbol T}}
\def\bX{{\boldsymbol X}}
\def\bY{{\boldsymbol Y}}
\def\bZ{{\boldsymbol Z}}
\def\btheta{{\boldsymbol \theta}}
\def\btau{{\boldsymbol \tau}}
\def\bSigma{{\boldsymbol \Sigma}}
\def\bU{{\boldsymbol U}}
\def\bV{{\boldsymbol V}}
\def\tbU{\tilde{\boldsymbol U}}
\def\tbV{\tilde{\boldsymbol V}}
\def\tbX{\tilde{\boldsymbol X}}
\def\id{{\boldsymbol I}}
\def\tT{\widetilde{T}}
\def\bA{{\boldsymbol A}}
\def\bB{{\boldsymbol B}}
\def\bg{{\boldsymbol g}}
\def\bxi{{\boldsymbol \xi}}
\def\Group{{\mathfrak G}}
\def\O{{\rm O}}
\def\SO{{\rm SO}}
\def\U{{\rm U}}
\def\U1{{\rm U}(1)}
\def\rQ{{\rm Q}}
\def\Tr{{\rm Tr}}
\def\MSE{{\rm MSE}}
\def\htheta{\hat{\theta}}
\def\eps{{\varepsilon}}
\def\rE{{\rm E}}
\def\rP{{\rm P}}
\def\conn{\sim_{U}}
\title{Group Synchronization on Grids}
\author{Emmanuel Abbe\thanks{Program in Applied and Computational    Mathematics, and EE Department, Princeton University}, \;\;Laurent
  Massoulie\thanks{Inria, MSR-Inria Joint Centre}, \;\;
Andrea 
Montanari\thanks{Department of Electrical Engineering and Department of Sttistics, Stanford University}, \;\; Allan Sly\thanks{Department of Mathematics, Princeton University},
 \;\;  Nikhil Srivastava\thanks{Department of Mathematics, University of California, Berkeley}}
\begin{document}

\maketitle

\begin{abstract}
Group synchronization requires to estimate unknown elements $(\btheta_v)_{v\in V}$ of a compact group $\Group$
associated to the vertices of a graph $G=(V,E)$, using noisy observations of the group differences associated to  the edges.
This model is relevant to a variety of applications ranging from structure from motion in computer vision to graph localization and positioning,
to certain families of community detection problems. 

We focus on the case in which the graph $G$ is the $d$-dimensional grid. Since the unknowns 
$\btheta_v$ are only determined up to a global action of the group, we consider the following weak recovery question.
Can we determine the group difference $\btheta_u^{-1}\btheta_v$ between far apart vertices $u, v$ better than by random guessing?
We prove that weak recovery is possible (provided the noise is small enough) for $d\ge 3$ and, for certain finite groups, for $d\ge 2$.
Viceversa, for some continuous groups, we prove that weak recovery is impossible for $d=2$. Finally, for strong enough noise, weak recovery is always impossible.
\end{abstract}

\section{Introduction}

In the group synchronization problem, we are given a (countable) graph $G=(V,E)$, a group $\Group$
and, for each edge $(u,v)\in E$, a noisy observation $\bY_{u,v}$. The objective is to estimate group elements
$(\btheta_{v})_{v\in V}$ associated to the vertices $v\in V$, under the assumption that the $\bY_{u,v}$ are noisy observations
of the group difference between the adjacent vertices. Roughly speaking (see below for a precise definition):
\begin{align}
\bY_{uv}=\btheta^{-1}_u\btheta_v +\; \mbox{noise}\, .
\end{align}
In order for the above to be unambiguous, we will assume that an orientation $(u,v)$ is fixed arbitrarily for each edge.

It is useful to introduce two concrete examples.
\begin{example}\label{ex:Z2}
The simplest example is $\Group = \integers_2 = \{(+1,-1),\,\cdot\,\}$, the group with elements $(+1,-1)$ and
operation given by ordinary multiplication (equivalently, the group of integers modulo $2$).
For each edge $(u,v)\in E$ we are given $Y_{uv}$ a noisy observation of $\btheta_u\btheta_v = \btheta_u^{-1}\btheta_v$.
For instance we can assume that, for some $p\in [0,1/2)$,
\begin{align}
\bY_{uv}= \begin{cases}
\btheta_{u}\btheta_v &\;\;\; \mbox{ with probability $1-p$,}\\
-\btheta_{u}\btheta_v &\;\;\; \mbox{ with probability $p$.}
\end{cases}\label{eq:NoiseZ2}
\end{align}
with the $(\bY_{uv})_{(u,v)\in E}$ conditionally independent given $(\btheta_v)_{v\in V}$. 
In other words $\bY_{uv}$ is the output of a binary symmetric channel with flip probability $p$ and input $\btheta_u\btheta_v$.

We will refer to this case as $\integers_2$ synchronization.
\end{example}

\begin{example}\label{ex:Orth}
Consider $\Group = \O(m)$: the group of $m\times m$ orthogonal matrices, with the following  noise model.
Let $(\bZ_{uv})_{(u,v)\in E}$ be an i.i.d. collection of matrices with i.i.d. standard normal entries, and define
\begin{align}
\bY_{uv} = \cP_{\O(m)}(\btheta_u^{-1}\btheta_v+\sigma\bZ_{uv})\, .
\end{align}
Here $\cP_{\O(m)}$ is the projector for the Frobenius norm $\|\cdot \|_{F}$ onto the orthogonal group, namely for a matrix $\bM$ with singular value decomposition
$\bM = \bU\bSigma\bV^{\sT}$, we set $\cP_{\O(m)}(\bM) = \bU\bV^{\sT}$.
\end{example}

Group synchronization plays an important role in a variety of applications.

\noindent \emph{Structure from motion} is a central problem in computer vision:
given multiple images of an object taken from different points of view (and in presence of noise or occlusions) we want to reconstruct the
3-dimensional structure of the object \cite{moulon2013global,chatterjee2013efficient,ozyesil2015robust,wilson2016rotations}. A possible intermediate step towards this goal consists in estimating the relative orientation of the object 
with respect to the camera in each image. This can be formulated as a group synchronization problem over $\Group = \SO(3)$,
whereby $\btheta_u$ describes the orientation of image $u$, and pairwise image registration is used to construct the relative rotations $\bY_{uv}$.

\noindent \emph{Graph localization and positioning.} Consider a set of nodes with positions $\bx_1,\dots,\bx_n\in\reals^d$. We want 
to reconstruct the nodes positions from noisy measurements of the pairwise distances $\|\bx_u-\bx_v\|_2$. This question
arises in sensor network positioning \cite{hightower2001location,oh2010sensor},
imaging  \cite{cucuringu2012eigenvector,singer2011three}, manifold learning \cite{tenenbaum2000global}, to name only a few applications.
It is often the case that measurements are only available  for pairs $u,v\in [n]$ that are close enough, e.g. only if
$\|\bx_u-\bx_v\|_2\le \rho$ for $\rho$ a certain communication range \cite{singer2008remark,javanmard2013localization}. 

Graph localization can be interpreted as a group synchronization problem in multiple ways. First, we can interpret the unknown position $\bx_v$
as a translation and hence view it as a synchronization problem over the group of translations in $d$ dimensions. Alternatively we can adopt
a divide-and-conquer approach following \cite{cucuringu2012eigenvector}. First, we consider cliques in the graph and find their relative positions. Then we reconstruct the relative 
orientations of various cliques, which can be formulated as an $\SO(d)$ synchronization problem.

\noindent \emph{Community detection and the symmetric stochastic block model.} The $k$-groups symmetric stochastic block model is a random graph over $n$ 
vertices generated as follows \cite{moore2017computer,abbe2017community}. First, partition the vertex set into $k$ subsets of size $n/k$, uniformly at random. Then connect vertices independently, conditional on the partition.
Two vertices are connected with probability $p$ if they belong to the same subset, and with a smaller probability $q<p$ otherwise.
Given a realization of this graph, we would like to identify the partition. This problem is in fact closely related to synchronizations over 
$\integers_k$ (the group of integers modulo $k$). Extensions of the stochastic block model where edges are endowed with labels have also been considered \cite{lelarge2015}. In particular the so-called censored block model considered in \cite{saade2015} corresponds precisely to Example \ref{ex:Z2} on an Erd\H{o}s-R\'enyi graph.

The literature on group synchronization is fairly recent and rapidly growing. The articles \cite{singer2011angular,wang2013exact} discuss it in a variety of applications 
and propose several synchronization algorithms, mostly based on spectral methods or semidefinite programming (SDP) relaxations.
Theoretical analysis --- mostly in the case of random (or complete) graphs $G$ ---
is developed in
\cite{abbe2014decoding,boumal2014cramer,javanmard2016phase,perry2016message}.
Most of these studies use  perturbation theoretic arguments which crucially
rely on the fact that the Laplacian (or connection Laplacian,
\cite{bandeira2013cheeger}) of the underlying graph has a spectral gap. This
paper shows that nontrivial recovery is possible even in the absence of a
spectral gap, as in the case of grids with $d\ge 3$, whose
Laplacian pseudoinverses have appropriately bounded trace rather than norm.

In the present paper we are interested in $G$ being the $d$-dimensional grid\footnote{The case $d=1$ is somewhat trivial.}, $d\ge 2$. 
Namely, $V=\integers^d$, and --to be definite-- we orient edges in the positive direction:
\begin{align}
E \equiv \big\{ (x,y):\; y-x\in \{e_1,\dots,e_d\}\, \big\}\, ,
\end{align}
where $e_i=(0,\cdots,0,1,0,\dots,0)$ is the $i$-th element of the canonical basis in $\reals^d$. We expect other $d$-dimensional graphs (e.g. 
random geometric graphs) to present a qualitatively similar behavior.

 By construction, we can hope to determine the 
unknowns $(\btheta_x)_{x\in\integers^d}$ only up to a global action by a group element. In other words, we cannot distinguish between 
$(\btheta_x)_{x\in\integers^d}$ and $(\bg\btheta_x)_{x\in\integers^d}$ for some $\bg\in\Group$. We thus ask the following \emph{weak recovery question}:
\begin{quote}
\emph{Is it possible to estimate $\btheta_x^{-1}\btheta_y$ better than random guessing, as $\|x-y\|_2\to\infty$?}
\end{quote}
Note that, in absence of noise (i.e. if $\bY_{uv} = \btheta_u^{-1}\btheta_v$ exactly), the answer is always positive:
we can multiply the observations $\bY_{uv}$'s along any path connecting $x$ to $y$ to reconstruct exactly  $\btheta_x^{-1}\btheta_y$. However
for any arbitrarily small noise level, errors add up along the path and this simple procedure is equivalent to random guessing for $\|x-y\|_2\to\infty$.
The weak recovery question hence amounts to asking whether we can avoid error propagation.

Focusing on the case of compact matrix groups, we will present the following main results:
\begin{description}
\item[Low noise, $d\ge 3$.] For sufficiently low noise, we prove that weak recovery is possible for $d\ge 3$ and any group.
\item[High noise.] Vice-versa, weak recovery is impossible in any dimension at sufficiently high noise (or for $d=1$ at any positive noise).
\item[Discrete groups.] For the special case of $\integers_2$-synchronization, we prove that weak recovery is possible (at low enough noise) for all $d\ge 2$.
We expect the same to hold generally for other discrete groups.
\item[Continuous groups, $d=2$.] Vice-versa, for the simplest continuous group, $\SO(2)$, we prove that weak recovery is impossible for $d=2$.
\end{description}
The above pattern is completely analogous to the one of phase transitions in spin models within statistical physics \cite{georgii2011gibbs}.
We refer to Section \ref{sec:Bayes} for a discussion of the connection with statistical physics.

The rest of the paper is organized as follows. Section \ref{sec:Results} presents formal definitions and statements of our main results.
In order to achieve optimal synchronization, it is natural to consider the Bayes posterior of the unknowns $(\btheta_v)_{v\in V}$, cf.
Section \ref{sec:Bayes}. While this does not lead directly to efficient algorithms, it clarifies the connection with statistical physics.
Some useful intuition can be developed by considering the  case\footnote{Strictly speaking, this is not
a special case of the problem studied in the rest of the paper, because $\Group=\reals$ is not a compact group.} in which $\theta_v\in\reals$ and 
$Y_{uv} =\theta_v-\theta_u+Z_{uv}$ with $(Z_{uv})_{(u,v)\in E}$ i.i.d. noise. This can be treated by elementary methods, cf. Section \ref{sec:Toy}.
Finally, Section \ref{sec:ProofMoments} and \ref{sec:ProofZ2} prove our positive results (reconstruction is possible) with other proofs deferred to
the appendices.

\vspace{0.5cm}

\noindent{\bf Notations.} Throughout the paper we use boldface symbols (e.g. $\btheta_x$, $\bY_{xy}$) to denote elements of the group $\Group$,
and normal symbols for other quantities (including vectors and matrices).

\section{Main results}
\label{sec:Results}

As mentioned above, $G=(V,E)$ will be the infinite $d$-dimensional grid, and  $\Group$ a compact matrix group.
Without loss of generality, we will assume $\Group\subseteq \O(m)$ (the group of $m\times m$ orthogonal matrices).
We attach to each vertex $x\in V$ an element $\btheta_x\in\Group$ which may be
deterministic or random chosen independently from some distribution. 

We are given observations $\bY = (\bY_{xy})_{(x,y)\in E}$, $\bY_{xy}\in \Group$, that are conditionally independent
given $\btheta$. We assume that observations are unbiased in the following sense:
\begin{align}
\E\{\bY_{xy}|\btheta\} = \lambda\, \btheta_x^{-1}\btheta_y\, ,\label{eq:Unbiased}
\end{align}
where the parameter $\lambda\in [0,1]$ is a natural measure of the signal-to-noise ratio. In particular, $\lambda=1$ corresponds
to noiseless observations. The two examples given in the introduction fit this general definition:
\begin{itemize}
\item For $\integers_2$ synchronization (cf. Example \ref{ex:Z2}) we have $\E\{\bY_{xy}|\btheta\} = (1-2p)\, \btheta_x^{-1}\btheta_y$, and
therefore $\lambda = (1-2p)$.
\item For $\O(m)$ synchronization (cf.  Example \ref{ex:Orth}) we have $\E\{\bY_{xy}|\btheta\} = \lambda(\sigma^2)\, \btheta_x^{-1}\btheta_y$
where $\sigma^2\mapsto \lambda(\sigma^2)$ is a continuous function on $[0,\infty)$ with $\lambda(\sigma^2)\to 1$ as $\sigma^2\to 0$ and
$\lambda(\sigma^2)\to 1$ as $\sigma^2\to \infty$ (see Appendix \ref{app:Lambda}).
\end{itemize}
A simple mechanism to produce the noisy observations $\bY_{xy}$ consists in introducing a probability kernel $\rQ$ on $\Group$ 
and stipulate that, for each edge $(x,y)$, 
\begin{align}
\prob (\bY_{x,y}\in\,\cdot\,|\btheta)  =\prob (\bY_{x,y}\in\,\cdot\,|\btheta_x^{-1}\btheta_y) =\rQ(\,\cdot\,|\btheta_x^{-1}\btheta_y )\, .
\end{align}
In other words, all observations are obtained by passing $\btheta_x^{-1}\btheta_y$ through the same noisy channel.
While our results do not necessarily assume this structure, both of the examples given above are of this type.

An estimator is a collection of measurable functions $T_{uv}:\bY\mapsto T_{u,v}(\bY)\in\Group$ indexed by all vertex pairs $u,v\in V$
(here $\bY=(\bY_{xy})_{(x,y)\in E}$ denotes the set of all observations).
\begin{definition}
We say that the weak recovery problem is solvable for the probability distribution $\prob$ over $(\btheta,\bY)$ defined above if
there exists an estimator $T$, and $\eps>0$, such that
\begin{align}
\liminf_{\|x-y\|\to\infty}\Big\|\prob\big(\btheta_xT_{xy}(\bY)\btheta_y^{-1}\in \;\cdot\;\big)- \prob_{{\rm Haar}}\big( \; \cdot\;\big)\Big\|_{\sTV}\ge \eps>0\, .
\label{eq:WeakRecovery}
\end{align}
\end{definition}

Our first result establishes  that the problem is solvable if noise is small enough in $d\ge 3$ dimensions.
\begin{theorem}\label{thm_moments}
If $d\ge 3$, then there exists $\lambda_{\sUB}\in (0,1)$ such that, if $\lambda> \lambda_{\sUB}$ then the weak recovery problem is solvable.
\end{theorem}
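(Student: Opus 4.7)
The plan is to prove weak recovery by a path-counting second moment argument. I would build a matrix-valued statistic
\[
M_{xy}(\bY) \;=\; \sum_{\gamma:\, x\to y} a_\gamma \prod_{e \in \gamma} \bY_e^{\sigma_e(\gamma)},
\]
where the sum runs over an ensemble of walks in $\integers^d$ joining $x$ to $y$ (non-backtracking walks of a fixed length $L$, say), $\sigma_e(\gamma)\in\{\pm 1\}$ records the orientation in which $\gamma$ traverses $e$, and $a_\gamma\ge 0$ are deterministic weights. By unbiasedness \eqref{eq:Unbiased} and conditional independence of the $\bY_e$ given $\btheta$, the telescoping product along $\gamma$ gives $\E[M_{xy}\mid\btheta] = C_{xy}\,\btheta_x^{-1}\btheta_y$ with $C_{xy} = \sum_\gamma a_\gamma \lambda^{|\gamma|}$. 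The final estimator $T_{xy}\in\Group$ would be obtained by projecting $M_{xy}$ onto $\Group$ via the Frobenius-closest group element, as in Example \ref{ex:Orth}.

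To conclude weak recovery it suffices to show that, uniformly in $\|x-y\|$, the normalized second moment $\E[\|M_{xy}\|_F^2]/|C_{xy}|^2$ stays bounded. Indeed, the Paley--Zygmund inequality then yields that $M_{xy}/C_{xy}$ has a constant-probability correlation with $\btheta_x^{-1}\btheta_y$, and this correlation survives the projection onto $\Group$, biasing the law of $\btheta_x T_{xy}\btheta_y^{-1}$ away from Haar measure and thus establishing \eqref{eq:WeakRecovery}.

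The core computation is to expand $\E[\|M_{xy}\|_F^2\mid\btheta]$ as a double sum over walk pairs $(\gamma_1,\gamma_2)$. Because the group sits inside $\O(m)$, on edges $e$ used by both walks the corresponding contribution telescopes through $\bY_e \bY_e^{-1}=\Id$ to a factor of order $1$, whereas on unshared edges unbiasedness gives a factor of $\lambda$. Thus the second-to-first-moment ratio is controlled by a quantity of the form
\[
\frac{\E[\|M_{xy}\|_F^2]}{|C_{xy}|^2} \;\le\; c\cdot \E_{\gamma_1,\gamma_2}\!\big[\beta^{|\gamma_1\cap \gamma_2|}\big],
\]
where $\gamma_1,\gamma_2$ are independent walks from the chosen ensemble and $\beta=\beta(\lambda)\to 1$ as $\lambda\to 1$. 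The role of $d\ge 3$ is that on $\integers^d$ the simple (or non-backtracking) walk is transient: the Green's function $\sum_n p_n(x,x)$ is finite, so two independent walks share only $O(1)$ edges in expectation, with exponential tails. Taking $\lambda$ close enough to $1$ that $\beta$ lies inside the radius of convergence of the moment generating function of $|\gamma_1\cap\gamma_2|$ then gives the desired uniform bound. This is exactly the analytic content highlighted in the introduction: the grid Laplacian pseudoinverse has bounded trace, even though its operator norm diverges.

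The principal obstacle is the non-commutativity of $\Group$. Unlike the abelian or $\integers_2$ case, the shared-edge cancellations in $M_{xy}M_{xy}^{*}$ depend on the order in which the shared edges appear inside each walk; generically one faces mixed products $\bY_{e_1}\cdots\bY_{e_k}\cdot(\bY_{f_1}\cdots\bY_{f_\ell})^{-1}$ that do not immediately collapse. The technical heart of the argument is therefore to design the walk ensemble (e.g.\ non-backtracking walks, expanded according to the connected components of $\gamma_1\cap\gamma_2$) so that each component contributes a Green's-function factor and the second moment truly reduces to the intersection-length bound above. Once that is in place, the random-walk transience input from $d\ge 3$ closes the argument for all $\lambda$ above some $\lambda_{\sUB}<1$.
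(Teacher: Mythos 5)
Your high-level strategy matches the paper's: an unbiased path-sum estimator (the paper uses an expectation over a measure on paths rather than a weighted sum, but that is cosmetic), controlled by a second-moment bound on $\E_{\gamma_1,\gamma_2}\beta^{|\gamma_1\cap\gamma_2|}$, followed by projection onto $\Group$ and Markov/Paley--Zygmund. You also correctly flag the non-commutativity issue as the crux. But you stop at flagging it; the proposal does not actually resolve it, and the fix you gesture at (non-backtracking walks) does not work. The resolution in the paper is to average over \emph{coordinatewise increasing} paths from $0$ to $u(n)=(n,\dots,n)$: if two increasing paths from $0$ share an edge $(v,v+e_i)$, that edge sits at step $\|v\|_1+1$ in \emph{both} paths, i.e.\ $I_k(\gamma_1)=I_k(\gamma_2)$ for the same $k$. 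This positional alignment is exactly what lets the mixed products $\bY_{I_1(\gamma_1)}\cdots\bY_{I_L(\gamma_1)}\bY_{I_L(\gamma_2)}^{\sT}\cdots\bY_{I_1(\gamma_2)}^{\sT}$ telescope from the middle using $\bY_e\bY_e^{\sT}=\Id$, reducing the second moment purely to an intersection count. For non-backtracking (or simple) walks a shared edge can occur at different positions in the two walks, the intermediate factors do not cancel, and the calculation falls apart.

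There is a second, independent gap. You write that in $d\ge 3$ two independent walks ``share only $O(1)$ edges in expectation, with exponential tails,'' and the whole bound hinges on finiteness of $\E_{\gamma_1,\gamma_2}\lambda^{-2|\gamma_1\cap\gamma_2|}$, i.e.\ on having a genuine exponential intersection tail. For simple (or non-backtracking) random walk this is false in the critical dimensions $d=3,4$: two independent walks in $\integers^3$ or $\integers^4$ intersect infinitely often almost surely, so no such MGF bound holds. The paper gets exponential intersection tails from the Benjamini--Pemantle--Peres construction of a carefully correlated measure $\mu$ on increasing paths in $\integers^3$ satisfying $(\mu\times\mu)\{|\gamma_1\cap\gamma_2|\ge k\}\le C_*\beta_*^k$ with $\beta_*<1$; this is a nontrivial input, not a generic transience fact. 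Finally, monotone paths only reach points in the positive orthant, so the paper first builds the diagonal estimator $0\to u(n)$ by reflecting a BPP path, then composes along coordinate directions (Lemma \ref{lemma:MomentsFourth} and the final theorem) using the Frobenius submultiplicativity bound \eqref{eq:prod_frob}; your proposal would need an analogous extension step from the special endpoints to general $x,y$.
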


If noise is strong enough, the problem becomes unsolvable.
\begin{theorem}\label{thm:Noisy}
Assume that: 
\begin{enumerate}
\item $\prob (\bY_{x,y}\in\,\cdot\,|\btheta)=\prob (\bY_{x,y}\in\,\cdot\,|\btheta_x^{-1}\btheta_y)$.
\item $\prob (\bY_{x,y}\in\,\cdot\,|\btheta_x^{-1}\btheta_y)$   has density $q(\by|\btheta_0)$, $\btheta_0\in\Group$ 
with respect to the Haar probability measure.
\end{enumerate}
Let $p_c(d)\in (0,1]$ the critical threshold for percolation on the $d$-dimensional grid. If
\begin{align}
\inf_{\by,\btheta_0}  q(\by|\btheta_0)> 1-p_c(d)\, ,
\end{align}
then the weak recovery problem is not solvable.

In particular, for $d=1$, the recovery is not solvable as soon as the noise is strictly positive $\inf_{\by,\btheta}  q(\by|\btheta)> 0$
(since $p_c(d=1)=1$).
\end{theorem}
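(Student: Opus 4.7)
The plan is to exhibit a coupling of the observation model with a subcritical Bernoulli bond percolation on the grid, so that distant vertices almost surely belong to disjoint components of the ``informative'' edges; a per-cluster gauge symmetry argument under the Haar prior on $\btheta$ then shows that any estimator is effectively guessing at random.

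First, the hypothesis $\alpha := \inf_{\by,\btheta_0}q(\by|\btheta_0) > 1-p_c(d)$ yields the mixture decomposition
\[
q(\by|\btheta_0) \;=\; \alpha\cdot 1 \;+\; (1-\alpha)\,\tilde q(\by|\btheta_0),\qquad \tilde q(\by|\btheta_0) := \frac{q(\by|\btheta_0)-\alpha}{1-\alpha},
\]
where $1$ denotes the Haar density and $\tilde q(\cdot|\btheta_0)$ is again a probability density on $\Group$. Equivalently, $\bY$ may be generated by drawing i.i.d.\ $\bxi_e\in\{0,1\}$ with $\P(\bxi_e=1)=\alpha$ on each edge and then sampling $\bY_e$ from Haar if $\bxi_e=1$ (a ``pure-noise'' edge) and from $\tilde q(\cdot|\btheta_u^{-1}\btheta_v)$ otherwise (an ``open'' edge). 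Open edges form Bernoulli bond percolation on $\integers^d$ at density $1-\alpha < p_c(d)$, which is subcritical, so every open cluster $C(v)$ is a.s.\ finite and, by classical results, $\P(x\leftrightarrow y \text{ in the open subgraph})\to 0$ as $\|x-y\|\to\infty$. For $d=1$ this is automatic since $p_c(1)=1$ and any $\alpha>0$ suffices.

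The heart of the argument -- and what I anticipate as the main step to write up cleanly -- is a \emph{per-cluster gauge invariance} under the Haar prior on $\btheta$. For any realization of $\bxi$ and any family $(\bg_C)$ of group elements indexed by the open clusters, consider the map $\Phi:\btheta_v\mapsto \bg_{C(v)}\btheta_v$. The product Haar prior is left-invariant, so $\Phi(\btheta)\stackrel{d}{=}\btheta$ given $\bxi$. On open edges, $C(u)=C(v)$ forces $\btheta_u^{-1}\btheta_v$ to be preserved, leaving the likelihood $\tilde q(\bY_{uv}|\btheta_u^{-1}\btheta_v)$ unchanged; on closed edges, $\bY_{uv}$ is Haar and independent of $\btheta$ in any case. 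Hence the joint law of $(\btheta,\bY)$ conditional on $\bxi$ is invariant under $\Phi$. Specializing to $\bg_{C(x)}$ an independent Haar element and $\bg_C=\Id$ for all other clusters, on the $\bxi$-measurable event $\{C(x)\neq C(y)\}$, one obtains that the conditional law of $\btheta_x^{-1}\btheta_y$ given $(\bY,\bxi)$ is Haar on $\Group$.

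To conclude: for any estimator $T_{xy}(\bY)$ (which is $(\bY,\bxi)$-measurable), bi-invariance of Haar implies the conditional law of $\btheta_x T_{xy}(\bY)\btheta_y^{-1}$ given $(\bY,\bxi)$ on $\{C(x)\neq C(y)\}$ is also Haar. Averaging out,
\[
\bigl\|\,\mathrm{law}\bigl(\btheta_x T_{xy}(\bY)\btheta_y^{-1}\bigr)\,-\,\mathrm{Haar}\,\bigr\|_{\sTV} \;\le\; \P\bigl(x\leftrightarrow y \text{ in the open subgraph}\bigr)\,,
\]
which tends to $0$ as $\|x-y\|\to\infty$ and thus contradicts \eqref{eq:WeakRecovery} for any $\eps>0$. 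The critical role of the hypothesis $\inf q>1-p_c(d)$ is precisely to guarantee that the mixture has a $\btheta$-\emph{independent} component of mass exceeding $1-p_c(d)$, so that closed edges carry no information about $\btheta$ and the informative edges fail to percolate.
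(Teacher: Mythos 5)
Your proof is correct and follows essentially the same route as the paper's: decompose the channel density as a convex combination of the Haar density and a residual density, use that to couple the observations with Bernoulli bond percolation at subcritical density $1-\alpha < p_c(d)$, and conclude that the conditional law of $\btheta_x^{-1}\btheta_y$ is Haar on the event that $x$ and $y$ fall in distinct informative clusters, giving the total-variation bound $\P(x\leftrightarrow y)\to 0$. The only cosmetic difference is that the paper establishes the conditional-uniformity step by handing the estimator extra information (the percolation labels and exact group differences on informative edges) and asserting the conclusion, whereas you prove the same step directly via a per-cluster gauge transformation of the Haar prior; the two are equivalent.
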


In $d=2$ the situation is more complicated. For certain discrete groups the problem is solvable at low enough noise: we consider here
the case $\Group = \integers_2$, but we expect the same conclusion to
hold more generally. A result related to the next one 
was established in \cite{horiguchi1982existence} using a Peierls argument (Section \ref{sec:Bayes} outlines the connection with the statistical physics formulation). 
We present here an independent proof that also provides an efficient 
recovery algorithm.
\begin{theorem}\label{t:Z2NoiseZ2}
Consider $d = 2$, and $\Group = \integers_2$, with uniform flip probability $p$. Then there exists $p_*\in (0,1)$ such that, if $p\le p_*$ then
the weak recovery problem is solvable.
\end{theorem}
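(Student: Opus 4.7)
The plan is to design a minimum-weight matching decoder using the planar duality of $\mathbb{Z}^2$, then analyze it with a Peierls-type contour bound. Work on a large box $\Lambda_N = \{-N,\ldots,N\}^2$ and later pass to $N \to \infty$. Write $\bY_{xy} = \btheta_x \btheta_y \eta_{xy}$, where the noises $\eta_{xy} \in \{\pm 1\}$ are i.i.d.\ with $\P(\eta_{xy} = -1) = p$ and independent of $\btheta$; let $B = \{e : \eta_e = -1\}$. For each unit face $F$ of $\mathbb{Z}^2$, the observable ``frustration'' $f_F := \prod_{e \in \partial F} \bY_e$ equals $\prod_{e \in \partial F} \eta_e$ (the $\btheta$'s cancel around any closed loop), so the set of frustrated faces equals $\partial B$ and is a function of $\bY$ alone. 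Let $\hat B \subseteq E(\Lambda_N)$ be a minimum-cardinality edge set with $\partial \hat B = \partial B$ in the interior of $\Lambda_N$; this is a minimum-weight perfect matching of the frustrated plaquettes in the dual graph, computable in polynomial time by the Blossom algorithm. Define the corrected edges $\hat \bY_e := \bY_e (-1)^{\mathbf{1}\{e \in \hat B\}}$; these are frustration-free, so $T_{uv}(\bY) := \prod_{e \in \pi} \hat \bY_e$ does not depend on the choice of path $\pi$ from $u$ to $v$.

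A direct computation shows $T_{uv}(\bY) \btheta_u \btheta_v = (-1)^{|\pi \cap (B \oplus \hat B)|}$, so $T_{uv}(\bY) = \btheta_u \btheta_v$ if and only if $B \oplus \hat B$ crosses $\pi$ an even number of times. Since $\partial(B \oplus \hat B) = \emptyset$, this set is an edge-disjoint union of simple cycles, and the crossing parity equals the parity of the number of these cycles that separate $u$ from $v$. Crucially, by the minimality of $|\hat B|$, every simple cycle $C \subseteq B \oplus \hat B$ must satisfy $|C \cap B| \geq |C|/2$: otherwise replacing $\hat B$ by $\hat B \oplus C$ would preserve the boundary while strictly reducing the matching weight.

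The Peierls estimate then goes as follows. For $w \in \Lambda_N$, let $A_w$ be the event that some cycle of $B \oplus \hat B$ encloses $w$. The number of simple dual cycles of length $L$ enclosing $w$ is at most $L \cdot 3^L$ (standard contour count), and the probability that a fixed length-$L$ cycle $C$ satisfies $|C \cap B| \geq L/2$ is at most $\binom{L}{L/2} p^{L/2} \leq (4p)^{L/2}$. Hence
\begin{equation}
\P(A_w) \leq \sum_{L \geq 4} L \, (36p)^{L/2} =: \delta(p),
\end{equation}
which is finite for $p < 1/36$ and tends to $0$ as $p \to 0$. Choose $p_*$ small enough that $\delta(p_*) < 1/4$. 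Any cycle that separates $u$ from $v$ must enclose exactly one of them, so a union bound gives
\begin{equation}
\P\bigl(T_{uv}(\bY) \ne \btheta_u \btheta_v\bigr) \leq \P(A_u) + \P(A_v) \leq 2\delta(p) < \tfrac{1}{2},
\end{equation}
uniformly in $\btheta$, in $\|u-v\|$, and in $N$ (provided $u, v$ are deep enough inside $\Lambda_N$). This yields weak recovery with $\eps \geq \tfrac{1}{2} - 2\delta(p)$.

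The principal subtlety is that the definition of weak recovery in Eq.~\eqref{eq:WeakRecovery} lives on the infinite grid, whereas the decoder is defined in a finite box. One handles this by taking $N$ larger than any fixed scale of interest and extracting a subsequential limit of the joint distribution of $\bigl(\btheta_x T_{xy}(\bY) \btheta_y^{-1}\bigr)_{x,y \in \Lambda_M}$ via tightness; since the contour events $A_w$ depend only on the local structure of $B \oplus \hat B$ near $w$, the uniform bound above survives in the limit. A secondary, nonessential issue is that the explicit threshold $p_* < 1/36$ produced by this crude union bound is far from the true matching-decoder threshold $\approx 0.103$ of the planar $\pm J$ Ising problem, but any positive $p_*$ suffices for the statement.
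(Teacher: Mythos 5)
Your proof is correct, but it takes a genuinely different route from the paper's. You deploy the classical ``matching decoder'' from planar spin glasses and topological error correction: observe the cocycle of frustrated plaquettes, pair them up by a minimum-cardinality $T$-join $\hat B$ in the dual lattice (Blossom/Edmonds--Johnson), and show that the residual error chain $B \oplus \hat B$ consists only of cycles in which at least half the edges are actual noise, so that a Peierls contour sum controls the probability that such a cycle encircles a given vertex. The paper instead runs a multi-scale renormalization: it partitions the lattice into blocks of doubly-exponentially growing side $\ell_k = 2^{10k(k+1)}$, recursively declares blocks good or bad, and synchronizes across good blocks by majority votes on block interfaces, correcting at most one bad sub-block per scale. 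Both constructions give polynomial-time estimators. Your argument is shorter and more transparent for this specific $d=2$, $\integers_2$ instance, but it leans hard on planar duality and on the fact that $\integers_2$-frustration is a $\{\pm 1\}$-valued plaquette observable; the paper's scheme is less tied to these coincidences, which is presumably why the authors chose it while remarking that they ``expect the same to hold generally for other discrete groups.''

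Two small points worth tightening. First, the intermediate bound $\P(|C\cap B| \ge L/2) \le \binom{L}{L/2} p^{L/2}$ keeps only the $k=L/2$ term of the binomial tail; the clean chain is
\begin{align}
\P\bigl(|C\cap B| \ge L/2\bigr) \;=\; \sum_{k \ge L/2} \binom{L}{k} p^{k}(1-p)^{L-k} \;\le\; p^{L/2}\sum_{k}\binom{L}{k} \;=\; (4p)^{L/2},
\end{align}
which lands you in the same place. Second, the finite-box issue is real but less delicate than you suggest: the paper's Remark 3.1 already \emph{defines} weak recovery on $\integers^d$ as the monotone limit over finite boxes $\{-L,\dots,L\}^d$, so it suffices to exhibit, for each large $N$, an estimator on $\Lambda_N$ whose error probability for a fixed pair $(u,v)$ deep in the interior is bounded away from $1/2$ uniformly in $N$ --- which is exactly what your contour bound delivers. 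One concrete detail you should spell out is how the $T$-join handles frustrated plaquettes near $\partial\Lambda_N$ (allow matching to the outer face at zero or unit cost); this does not affect the contour estimate for cycles located far from the boundary, since a contour of length $L$ encircling $w$ lies within distance $L/2$ of $w$.
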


On the contrary, we expect that weak recovery is not possible in $d=2$ dimensions, for continuous groups even for very weak noise.
This is analogous to the celebrated Mermin-Wagner theorem in statistical mechanics  \cite{mermin1966absence,mermin1967absence}.
For the sake of simplicity, we focus on the case of $\Group = \SO(2)$ which is isomorphic to $\U1$, the group of complex variables
of unit modulus, with ordinary multiplication.  Let $Z$ a $\U1$-valued random variable with density $g$ satisfying
\begin{equation}\label{e:U1NoiseA}
g\in C^2, \quad \inf_{s\in [0,2\pi]} g(e^{is}) >0 \, .
\end{equation}
We consider observation on the edges corrupted by multiplicative noise
\begin{equation}\label{e:U1NoiseB}
\bY_{xy} =  \btheta_x^{-1}\btheta_y Z_{xy}\, ,
\end{equation}
where $(Z_{xy})_{(x,y)\in E}\sim_{iid}g$.
\begin{theorem}\label{t:Z2ctsNonRecon}
If $d=2$ and $\Group = \SO(2)$ with noise model satisfying \eqref{e:U1NoiseA} and \eqref{e:U1NoiseB}, then the weak recovery problem is not solvable.
\end{theorem}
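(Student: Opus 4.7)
The plan is a Mermin-Wagner argument adapted to the inference setting; I work with an i.i.d.\ Haar prior on the $\btheta_v$'s (the canonical case; extension to other reasonable i.i.d.\ priors requires minor additional bookkeeping, sketched at the end). Writing $\btheta_v = e^{i\tau_v}$ and exploiting that $\SO(2)$ is Abelian, one has $\btheta_x T_{xy}(\bY)\btheta_y^{-1} = T_{xy}(\bY)\cdot\btheta_x\btheta_y^{-1}$; since Haar is shift-invariant, a conditioning bound gives
\begin{equation*}
\bigl\|\prob\bigl(\btheta_x T_{xy}(\bY)\btheta_y^{-1}\in\cdot\bigr)-\prob_{\rm Haar}\bigr\|_{\sTV} \;\le\; \E\bigl\|\prob\bigl(\btheta_x\btheta_y^{-1}\in\cdot\,\big|\,\bY\bigr)-\prob_{\rm Haar}\bigr\|_{\sTV}\, .
\end{equation*}
It therefore suffices to show that the posterior of $\btheta_x^{-1}\btheta_y$ given $\bY$ converges in TV to Haar in expectation as $\|x-y\|\to\infty$.

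Let $\mu$ denote the annealed joint law on $(\tau,\bY)$, with density $\propto \prod_{e=(u,v)} g(y_e-\tau_v+\tau_u)$ against Haar$\,\times\,$Lebesgue. For a compactly supported $\phi:\integers^2\to\reals$, let $\mu^\phi$ be the pushforward of $\mu$ under $T_\phi:\tau\mapsto\tau+\phi$ (leaving $\bY$ fixed). The $\bY$-marginals of $\mu$ and $\mu^\phi$ agree exactly, so the chain rule for relative entropy yields
\begin{equation*}
\E_\mu\bigl[H\bigl(\mu^\phi(\cdot\,|\,\bY)\,\big\|\,\mu(\cdot\,|\,\bY)\bigr)\bigr] \;=\; H(\mu^\phi\,\|\,\mu) \;=\; \sum_{e\in E} H\bigl(g(\,\cdot+\nabla_e\phi)\,\big\|\,g\bigr)\, .
\end{equation*}
Assumption \eqref{e:U1NoiseA} ($g\in C^2$ and $\inf g>0$) provides a uniform quadratic bound $H(g(\cdot+s)\|g)\le C\,s^2$ valid for all $s\in[0,2\pi]$, so Pinsker's inequality yields $\E_\mu\|\mu^\phi(\cdot|\bY)-\mu(\cdot|\bY)\|_{\sTV}^{\,2} \;\lesssim\; \|\nabla\phi\|_{\ell^2(E)}^{\,2}$.

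The critical $d=2$ input enters now. For each $\alpha\in[0,2\pi)$ choose $\phi_\alpha$ minimizing the Dirichlet energy subject to $\phi_\alpha(x)=0$, $\phi_\alpha(y)=\alpha$; the minimum equals $\alpha^2/R_{\rm eff}(x,y)$, and the classical effective-resistance estimate $R_{\rm eff}(x,y)\asymp \log\|x-y\|\to\infty$ on $\integers^2$ makes this $o(1)$. Moreover the harmonic minimizer has every edge gradient $O(\alpha/\log\|x-y\|)$ (bounded by the maximum current in the unit-drop electrical network), so the uniform quadratic bound applies edge-by-edge. Under $\mu^{\phi_\alpha}$ the conditional law of $\tau_y-\tau_x$ given $\bY$ is the $\mu$-posterior shifted by $\alpha$; combining the TV bound above with the averaging identity $\|\nu-\prob_{\rm Haar}\|_{\sTV}\le(2\pi)^{-1}\int_0^{2\pi}\|\nu-\nu*\delta_\alpha\|_{\sTV}\,d\alpha$, Fubini, and Jensen, one obtains $\E\|\prob(\btheta_x^{-1}\btheta_y\in\cdot\,|\,\bY)-\prob_{\rm Haar}\|_{\sTV}=O(1/\sqrt{\log\|x-y\|})\to 0$, completing the reduction.

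The main obstacle is the logarithmic divergence $R_{\rm eff}(x,y)\to\infty$ on $\integers^2$: this is precisely what distinguishes $d=2$ from $d\ge 3$ (where $R_{\rm eff}$ stays bounded, consistent with Theorem \ref{thm_moments}), and while the continuous Laplacian analogue is standard, the lattice effective-resistance estimate and the uniform current bound on the harmonic minimizer require a careful Green's-function computation. Secondary technical points are routine: the uniform quadratic entropy bound $H(g(\cdot+s)\|g)\le Cs^2$ follows from $\inf g>0$ and $C^2$-regularity by compactness of the circle, and the chain-rule/Pinsker step is legitimate because $T_\phi$ fixes $\bY$, so the $\bY$-marginals are literally equal. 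Extension from Haar to a general i.i.d.\ prior with density $h$ introduces an extra term $\sum_v H(h(\cdot-\phi_v)\|h)$ in the entropy, which one absorbs by requiring $\phi_\alpha$ to be supported on a set of vertices of volume $o(\log\|x-y\|)$.
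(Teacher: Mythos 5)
Your argument is correct and follows essentially the same Mermin--Wagner route as the paper's proof: a spatially-varying gauge deformation with logarithmic profile, whose Dirichlet energy is $O(1/\log\|x-y\|)$ precisely because $d=2$, makes the perturbed and unperturbed observation laws nearly indistinguishable, and Haar-averaging then forces the posterior of $\btheta_x^{-1}\btheta_y$ toward uniform. The implementation choices differ but are interchangeable: you use relative entropy, the chain rule and Pinsker where the paper tensorizes the $\chi^2$/$L^2$ distance between $\prob_{\btheta^{(s)}}(\bY\in\cdot)$ and $\prob_\btheta(\bY\in\cdot)$; and you invoke the Dirichlet minimizer with $R_{\rm eff}(x,y)\asymp\log\|x-y\|$ where the paper simply writes down the explicit profile $h(x)=1-\log(1+\min(\|x-u\|,L))/\log(L+1)$, for which $|\nabla_e h|=O\big(1/((1+\|x-u\|)\log L)\big)$ and $\sum_e|\nabla_e h|^2=O(1/\log L)$ are immediate, sparing the Green's-function/maximum-current work you flag as needed for the exact harmonic minimizer. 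Both yield the same $O(1/\sqrt{\log\|x-y\|})$ rate. One flaw in your closing remark: the claimed extension to a general i.i.d.\ prior with density $h$ cannot follow by shrinking the support of $\phi_\alpha$, because any admissible $\phi_\alpha$ has $\phi_\alpha(y)=\alpha$, so the extra prior term already contains $H(h(\cdot-\alpha)\|h)=\Theta(1)$ contributed by the vertex $y$ alone, which no support restriction removes; and indeed for a non-Haar i.i.d.\ prior the marginal of $\btheta_x^{-1}\btheta_y$ is not Haar, so the paper's weak-recovery criterion is trivially met by a constant estimator and the problem changes character. The Haar hypothesis is therefore structural, not a matter of bookkeeping.
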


\section{Bayesian posterior and connection to statistical physics}
\label{sec:Bayes}

In this section it is convenient to assume a more general model in which the observations $\bY_{xy}\in \reals^{m\times m}$ are not necessarily 
elements of the matrix group $\Group$. We assume that the conditional distribution of the observations $\bY_{xy}$
given the unknowns $\btheta_x$ is absolutely continuous with respect to a  reference measure $\prob_{\#}$ (independent of $\btheta$).
In practice, we will take $\prob_{\#}$ to be either the Haar measure on $\Group$, or the Lebesgue measure on $\reals^{m\times m}$. 
We denote the corresponding density by
\begin{align}
\frac{\de \prob}{\de\prob_{\#}} (\bY_{x,y}|\btheta) = \frac{1}{{\cZ_0}}\, \exp\big\{-u(\btheta_x^{-1}\btheta_y;\bY_{xy})\big\}\, .
\end{align}
where $u:\Group\times \reals^{m\times m}\to \reals\cup \{+\infty\}$ is a measurable function bounded below.
Applying Bayes formula, we can write the posterior $\mu_{\bY} (B) = \prob(\btheta\in B|\bY)$ as
\begin{align}
\mu_{\bY}(\de\btheta) = \frac{1}{\cZ(\bY)}\, \exp\Big\{-\sum_{(x,y)\in E} u(\btheta_x^{-1}\btheta_y;\bY_{xy})\Big\} \; \mu_0(\de\btheta)\, ,\label{eq:Gibbs}
\end{align}
where $\mu_0(\de\btheta) = \mu_0(\de\btheta_1)\cdots\mu_0(\de\btheta_n)$ is the product Haar measure over the unknowns and
$\cZ(\bY)$ is a normalization constant. The joint distribution (\ref{eq:Gibbs}) takes the form of a Gibbs measure on the graph $G$.
\begin{remark}
For Eq.~(\ref{eq:Gibbs}) to make sense, the graph $G$ needs to be finite. However, the Bayesian interpretation implies immediately
that quantities of interest have a well defined limit over increasing sequences of graphs. In particular, we can take $G$ to be 
the finite grid with vertex set $V = \{-L,\dots,L\}^d$, and edges $E=\{(x,y)\in V\times V: \; y-x\in \{e_1,\dots,e_d\}\}$. 
Then the quantity 
\begin{align} 
\sup_{T_{xy}(\,\cdot\,)}\Big\|\prob\big(\btheta_xT_{xy}(\bY)\btheta_y^{-1}\in \;\cdot\;\big)- \prob_{{\rm Haar}}\big( \; \cdot\;\big)\Big\|_{\sTV} 
\end{align}
is obviously non-decreasing in $L$ (because larger $L$ corresponds to a larger class of estimators) and hence admits a well defined limit.
We will refer succinctly to this $L\to\infty$ limit as the model on `the $d$-dimensional grid'.
\end{remark}

In the rest of this section, it will be useful to distinguish between the arguments of the posterior density (that we will keep denoting by $(\btheta_x)_{x\in V}$),
and the true unknowns that we will denote by $(\btheta_{0,x})_{x\in V}$. We further assume that the function $u$ satisfies
\begin{align}
u(\btheta \btau;Y)=u(\btau;\btheta^{-1}Y)=u(\btheta;Y\btau^{-1}).
\end{align}
for any $\btheta$, $\btau\in \Group$ and any $Y\in\reals^{m\times m}$. This condition is verified by all of our examples.
Thanks to this symmetry, for any $\{\btau_x\}_{x\in V}$ and any $\bY$, the distribution $\mu_{\bY}(\,\cdot\,)$ of $\btheta$ in \eqref{eq:Gibbs} coincides with that of $\{\btheta_x\btau^{-1}_x\}_{x\in V}$ where $\btheta$ is distributed according to $\mu_{\tilde{\bY}}(\cdot)$, and $\tilde{\bY}_{xy}=\btau_x \bY_{xy}\btau^{-1}_y$. By taking $\btau_x=
\btheta_{0,x}$  for all $x$, we can assume that $\btheta_{0,x}=\id_m$ 
for all $x$, which then leads to the $(\bY_{xy})_{(xy)\in E}$ being i.i.d. with common distribution
\begin{align}
\bY_{xy}\sim  \frac{1}{{\cZ_1}}\, \exp\big\{-u(\id_m;\bY_{xy})\big\} \prob_{\#}(\de\bY_{xy})\, . \label{eq:DistrNishimori}
\end{align}
In the jargon of statistical physics,  Gibbs measures of the form (\ref{eq:Gibbs}) with associated parameters distribution (\ref{eq:DistrNishimori}) are
known as spin-glasses on the `Nishimori line.' These were first introduced for the case $\btheta_x\in\{+1,-1\}$ \cite{nishimori1981internal} and 
subsequently generalized to other groups in  \cite{georges1985exact}. 
Several results about spin glasses on the Nishimori line were derived in \cite{ozeki1993phase,nishimori2001statistical} and the connection with Bayesian statistics was emphasized in 
\cite{iba1999nishimori,andrea2008estimating}.
The weak recovery phase transition corresponds to a paramagnetic-ferromagnetic phase transition in physics language.
\begin{example}\label{ex:RBIM}
The simplest example is the so-called \emph{random bond Ising model} which is obtained by taking $\btheta_x\in \{+1,-1\}$
and
\begin{align}
\mu_{\bY}(\btheta) = \frac{1}{\cZ(\bY)}\, \exp\Big\{\beta\sum_{(x,y)\in E} \bY_{xy}\btheta_x\btheta_y\Big\} \, ,\label{eq:Ising}
\end{align}
where $\bY_{xy} = +1$ with probability $1-p$ and $\bY_{xy} = -1$ with probability $p$. The Nishimori line is given by the condition $\beta = (1/2)\log((1-p)/p)$.
It is easy to see that this is equivalent to the Bayes posterior for the $\integers_2$ synchronization model of Example \ref{ex:Z2}, if we take $\btheta_{0,x}=+1$.

This model has attracted considerable interest within statistical physics. In particular, high-precision numerical estimates of the phase transition 
location yield $p_c\approx 0.1092$ (in $d=2$) and $p_c \approx 0.233$ (in $d=3$) \cite{parisen2009strong,hasenbusch2007critical}.
\end{example}

\begin{example}\label{ex:Om}
Take $\Group = \O(m)$ (the group of orthogonal matrices), and assume
\begin{align}
\bY_{xy} = \btheta_{0,x}^{-1}\btheta_{0,y} +\sigma\, \bZ_{xy}
\end{align}
where $\bZ_{xy}$ is a noise matrix with i.i.d. entries $(\bZ_{xy})_{ij}\sim \normal(0,1)$. 
This model is analogous to the one of Example \ref{ex:Orth}, although we do not project observations onto the orthogonal group.
 
After a simple calculation, the Gibbs measure  (\ref{eq:Gibbs}) takes the form
\begin{align}
\mu_{\bY}(\de\btheta) = \frac{1}{\cZ(\bY)}\, \exp\Big\{\beta\sum_{(x,y)\in E} \Tr\big(\btheta_x\bY_{xy}\btheta_y^{\sT}\big)\Big\} \; \mu_0(\de\btheta)\, ,\label{eq:GibbsOm}
\end{align}
where $\beta=1/\sigma^2$. By the symmetry under $\O(m)$ rotations, for the purpose of analysis we can assume $\bY_{xy} = \id_m+\sigma\, \bZ_{xy}$
which is the usual setting in physics. 
\end{example}
\begin{example}\label{ex:XY}
In the case $\Group = \SO(2)$ we can identify $\btheta_x$ with an angle in $[0,2\pi)$, and  let
\begin{align}
\bY_{xy} = \btheta_{0,y} -\btheta_{0,x}+ \bZ_{xy},\; \;\;\;\; \mod 2\pi\, ,
\end{align}
where $\bZ_{xy}$ is noise with density proportional to $\exp(-u(z))$ for $u(z)$ a periodic function bounded below.

The Gibbs measure (\ref{eq:Gibbs}) takes the form
\begin{align}
\mu_{\bY}(\de\btheta) = \frac{1}{\cZ(\bY)}\, \exp\Big\{-\sum_{(x,y)\in E} u\big(\bY_{xy}-\btheta_y+\btheta_x\big)\Big\} \; \mu_0(\de\btheta)\, . \label{eq:GibbsXY}
\end{align}
For the purpose of analysis we can assume $\bY_{xy} = \bZ_{xy}$. This is known as the `XY model' in physics.
\end{example}

Our results have direct implications on these models that we summarize in the following statement.
\begin{corollary}
Consider the Gibbs measure (\ref{eq:Gibbs}) on the $d$-dimensional grid, with parameters $\bY_{xy}\in \Group$ distributed  according to Eq.~(\ref{eq:DistrNishimori})
and satisfying Eq.~(\ref{eq:Unbiased}). Then, the following hold:
\begin{enumerate}
\item For $d\ge 3$, and $\Group\subseteq \O(m)$ is any compact matrix group, then
there exists $\lambda_{{\rm UB}}<1$ such that the model is in a ferromagnetic phase for any $\lambda> \lambda_{{\rm UB}}$.
\item For the case of Example \ref{ex:RBIM}  (i.e. $\Group =\integers_2$) and $d\ge 2$,
there exists $p_*\in (0,1)$ such that the model is in a ferromagnetic phase for any $p\le p_*$.
\item For the case of Example \ref{ex:XY}  (i.e. $\Group = \SO(2)$) and $d=2$ the 
model is not in a ferromagnetic phase  provided $z\mapsto u(z)$ is bounded.
\item For any group $\Group$, $d\ge 2$,   there exists a constant $c(d)$ such that, if $\|u\|_{\infty}\le c(d)$
then the model is not in a ferromagnetic phase.
\end{enumerate}
Furthermore point 1 applies to Example~\ref{ex:Om} as well.
\end{corollary}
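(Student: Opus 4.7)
The proof strategy is to translate each of the four claims into the corresponding one of Theorems~\ref{thm_moments}--\ref{t:Z2ctsNonRecon}, using the correspondence between ``ferromagnetic ordering'' and ``solvability of weak recovery'' that holds along the Nishimori line.

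First I would set up the dictionary once and for all. Under~\eqref{eq:DistrNishimori} the Bayes posterior coincides with the Gibbs measure~\eqref{eq:Gibbs}, and after the gauge reduction $\btheta_{0,x}=\id_m$ described just before~\eqref{eq:DistrNishimori}, the model being in a ferromagnetic phase is equivalent to the averaged posterior law of $\btheta_x^{-1}\btheta_y$ not converging to the Haar measure as $\|x-y\|\to\infty$. Since the Bayes-optimal estimator of $\btheta_x^{-1}\btheta_y$ (say, the posterior mean projected back onto $\Group$) maximizes every integrated correlation with the truth, this non-convergence is exactly~\eqref{eq:WeakRecovery}.

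With this dictionary, point 1 is an immediate restatement of Theorem~\ref{thm_moments}. Point 2 follows from Theorem~\ref{t:Z2NoiseZ2} when $d=2$ and from Theorem~\ref{thm_moments} when $d\ge 3$ (via $\integers_2\subseteq \O(1)$ with $\lambda=1-2p$). For the ``Furthermore'' assertion about Example~\ref{ex:Om}, whose observations $\bY_{xy}\in\reals^{m\times m}$ are not group-valued, I would preprocess by applying $\cP_{\O(m)}$ edge by edge, reducing to Example~\ref{ex:Orth}; by Appendix~\ref{app:Lambda} the projected $\bY_{xy}$ satisfy~\eqref{eq:Unbiased} with effective $\lambda(\sigma^2)\to 1$ as $\sigma\to 0$, so Theorem~\ref{thm_moments} applies for small $\sigma$, and weak recovery from the projected data a fortiori implies weak recovery from the original $\bY_{xy}$ (since the projection is a deterministic function of the data).

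Point 3 follows from Theorem~\ref{t:Z2ctsNonRecon}: identifying $\SO(2)$ with $\U1$ turns additive noise mod $2\pi$ into multiplicative noise of the form~\eqref{e:U1NoiseB}, and if $u\in C^2([0,2\pi])$ is bounded (the $C^2$ smoothness being an implicit regularity inherited from Example~\ref{ex:XY}) then $g\propto \exp(-u)$ is $C^2$ and bounded away from zero, which is~\eqref{e:U1NoiseA}. Point 4 follows from Theorem~\ref{thm:Noisy}: the symmetry $u(\btheta\btau;Y)=u(\btau;\btheta^{-1}Y)$ combined with Haar invariance shows that the normalization in~\eqref{eq:DistrNishimori} is independent of $\btheta_0$, so $\|u\|_\infty\le c$ yields $q(\by|\btheta_0)\ge e^{-2c}$ uniformly in $(\by,\btheta_0)$; taking $c(d)<-\tfrac12\log(1-p_c(d))$ (positive because $p_c(d)<1$ for $d\ge 2$) verifies the hypothesis of Theorem~\ref{thm:Noisy}. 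No step is a genuine obstacle, since the heavy lifting already sits inside the four theorems being quoted; the only real care is in setting up the dictionary above and in the uniform lower bound for $q$ used in point 4.
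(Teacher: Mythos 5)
Your proposal is correct and follows essentially the same route as the paper's proof: it is a translation of Theorems~\ref{thm_moments}, \ref{thm:Noisy}, \ref{t:Z2NoiseZ2}, \ref{t:Z2ctsNonRecon}, together with the edge-by-edge projection $\cP_{\O(m)}$ reducing Example~\ref{ex:Om} to Example~\ref{ex:Orth}. The paper states this much more tersely, and your added details (the bound $q(\by|\btheta_0)\ge e^{-2c}$ leading to $c(d)=-\tfrac12\log(1-p_c(d))$, the splitting of point~2 into $d=2$ and $d\ge3$ cases) are accurate fillings-in of what the paper leaves implicit; the one small slip --- positivity of $c(d)$ actually needs $p_c(d)>0$, while $p_c(d)<1$ only gives finiteness --- does not affect correctness since both hold for $d\ge2$.
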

\begin{proof}
These statements are merely a translation of Theorems \ref{thm_moments}, \ref{thm:Noisy}, \ref{t:Z2NoiseZ2}, \ref{t:Z2ctsNonRecon}
for the case in which channel observations take values in $\Group$. For the case in Example~\ref{ex:Om}, note that we can
always project $\bY_{xy}$ onto the group $\O(m)$, hence recovering the setting of Example \ref{ex:Orth}. Since weak recovery is possible in the latter,
it is also possible in the former.
\end{proof}
As already pointed out in Section \ref{sec:Results}, the existence of a ferromagnetic phase for Example \ref{ex:RBIM}  (i.e. $\Group =\integers_2$) was
already obtain in \cite{horiguchi1982existence}. Note however that \cite{horiguchi1982existence} estabilish existence of a spontaneous magnetization, 
while here we prove the existence of long range point-to-point correlation, which is equivalent to weak recovery.

\section{A toy example}
\label{sec:Toy}

It is instructive to consider a simple example in which $\Group =\reals$ is the group of translations
on the real line. This case does not fit the framework of the rest of this paper, but presents the same dichotomy between 
$d=2$ and $d\ge 3$ and can be solved by elementary methods. 

Throughout this section, we adopt additive notation, and hence the observation on edge $(x,y)$ takes the form
\begin{align}
Y_{x,y} = \theta_y-\theta_x+Z_{x,y}\, .\label{eq:Translation}
\end{align}
 where $\{Z_{xy}\}_{(x,y)\in E}$ are i.i.d. random variables with mean $0$ and variance $\sigma^2$.

To simplify our treatment, we assume the graph to be the discrete torus, with vertex set $V = \{1,2,\dots,L\}^d$ and
edges $E= \{(x,x+e_j): \;\; x\in V, \, j\in\{1,\dots,d\}\}$ (where we identify $L+1$ with $1$). 
 Denoting by $D$ the difference operator on 
$G$, the observation can be written as
\begin{align}
Y = D\theta+Z\, .
\end{align}
As usual, $\theta$ can be determined only up to a global shift. To resolve this ambiguity, it is convenient to assume that $\theta$ is centered:
 $\<\theta,1\>=0$.
Consider the least square estimator $\htheta(Y) = D^{\dagger}Y=DL^\dagger Y$ where $\dagger$
denotes the pseudoinverse. A standard calculation \cite[Theorem 13.13]{wasserman2013all} yields the following
formula for the mean square error
\begin{align}
\MSE(L,\sigma^2) &\equiv \frac{1}{L^d}\E\{\|\btheta(Y)-\theta\|_2^2\} \\
& = \frac{\sigma^2}{L^d}\Tr_0\big((D^{\sT}D)^{\dagger}\big) =
\frac{\sigma^2}{L^d}\Tr_0\big(\cL^{\dagger}\big) \, .
\end{align}
Here we denoted by $\cL = D^{\sT}D$ the Laplacian of $G$ and by $\Tr_0$ the trace on the subspace orthogonal to the all-ones vector.
The eigenpairs of the Laplacian are \cite{mohar1997some}:
\begin{align}
v(p)_x = \frac{1}{L^{d/2}}\, e^{i\<p,x\>}\, ,\;\;\;\; \lambda(p) = \sum_{i=1}^d[2-2\cos(p)]\, ,\\
p\in B_L\equiv \Big\{\frac{2\pi}{L}(n_1,\dots,n_d):\; n_i\in \{0,\dots, L-1\}\Big\}\, .
\end{align}
Hence
\begin{align}
\MSE(L,\sigma^2) = \frac{\sigma^2}{L^d} \sum_{p\in B_L\setminus\{0\}} \frac{1}{\lambda(p)}\, .
\end{align}
For large $L$, the sum can be estimated by approximating it via Riemann integrals to yield the following fact.
\begin{fact}
The mean square error of least-square estimation within the translation synchronization model of Eq.~(\ref{eq:Translation}) is
\begin{align}
\frac{1}{\sigma^2}\, \MSE(L,\sigma^2) =
\begin{cases}
\frac{L}{12}+O_L(1) & \mbox{ for $d=1$,}\\
\frac{1}{2\pi}\log L+O_L(1) & \mbox{ for $d=2$,}\\
C(d) +o_L(1)& \mbox{ for $d\ge 3$.}
\end{cases}
\end{align}
where $C(d)$ is a dimension dependent constant. 
\end{fact}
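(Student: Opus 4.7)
The plan is to start from the diagonalization
\[
\frac{1}{\sigma^2}\MSE(L,\sigma^2)=\frac{1}{L^d}\sum_{p\in B_L\setminus\{0\}}\frac{1}{\lambda(p)},\qquad \lambda(p)=4\sum_{i=1}^d\sin^2(p_i/2),
\]
and analyze the three regimes separately, exploiting that $\lambda$ is smooth, strictly positive outside $p\equiv 0$ modulo $2\pi\integers^d$, and satisfies $\lambda(p)=|p|^2+O(|p|^4)$ near the origin. The case $d=1$ requires no asymptotic analysis: the classical trigonometric identity $\sum_{n=1}^{L-1}\csc^2(\pi n/L)=(L^2-1)/3$, applied to our sum, yields the exact value $(L^2-1)/(12L)=L/12-1/(12L)$, which is $L/12+O_L(1)$.

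For $d\geq 3$, the target constant is $C(d)=(2\pi)^{-d}\int_{[0,2\pi]^d}\lambda(p)^{-1}\,dp$, and the integral converges because $|p|^{-2}$ is integrable in $d\geq 3$. To control the error between the Riemann sum and the integral, I would split $B_L$ into the regions $|p|\leq\eps$ and $|p|>\eps$. On the bulk region the integrand $\lambda^{-1}$ is smooth, so standard Riemann-sum bounds give an error $O_{\eps}(L^{-1})$. On the singular region $|p|\leq\eps$ the bound $\lambda(p)\gtrsim |p|^2$ combined with a shell count of lattice points shows that both the discrete sum and the continuous integral are bounded by $C\eps^{d-2}$, which vanishes as $\eps\to 0$ precisely because $d\geq 3$. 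Sending $L\to\infty$ first and $\eps\to 0$ afterwards closes the argument.

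The case $d=2$ is the most delicate, and I expect it to be the main obstacle. Here I would subtract off the singular part by writing $\lambda(p)^{-1}=|p|^{-2}+r(p)$, where $r$ extends to a continuous bounded function on $[-\pi,\pi]^2$. The regular term converges to the finite constant $(2\pi)^{-2}\int r$ by the same splitting argument as above. For the singular term, the substitution $p=2\pi n/L$ (using periodicity to identify $B_L$ with the symmetric box) gives
\[
\frac{1}{L^2}\sum_{p\neq 0}\frac{1}{|p|^2}=\frac{1}{4\pi^2}\sum_{n\in\{-L/2,\ldots,L/2\}^2\setminus\{0\}}\frac{1}{|n|^2},
\]
and comparison with $\int_{1\leq|x|\leq L/2}|x|^{-2}\,dx=2\pi\log(L/2)$ identifies this lattice sum as $2\pi\log L+O(1)$; dividing by $4\pi^2$ produces the advertised $(2\pi)^{-1}\log L$ leading order. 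The hardest technical step is quantifying the $O(1)$ discrepancy between the lattice sum and the radial integral. A clean route is to compare $|n|^{-2}$ with $\int_{[-1/2,1/2]^2+n}|x|^{-2}\,dx$ on each unit square, using $|x|^{-2}-|n|^{-2}=O(|n|^{-3})$ for $|n|\geq 1$ to produce a summable error, and then to pass to polar coordinates to evaluate the radial integral exactly. Once the singular behavior at $p=0$ is isolated, everything else reduces to routine Riemann-sum bookkeeping.
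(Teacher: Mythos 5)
Your proposal is a correct and carefully worked-out version of the Riemann-sum approximation that the paper invokes without detail, and the exact identity $\sum_{n=1}^{L-1}\csc^2(\pi n/L)=(L^2-1)/3$ for $d=1$ is a nice touch. One small slip for $d=2$: the remainder $r(p)=\lambda(p)^{-1}-|p|^{-2}$ is bounded on $[-\pi,\pi]^2$ but not continuous at the origin (its limiting value depends on the direction of approach, e.g.\ $1/12$ along an axis versus $1/24$ along a diagonal); this does not affect your argument, since boundedness together with a measure-zero set of discontinuities already gives Riemann integrability, which is all the convergence of the Riemann sum requires.
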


We observe that this qualitative behavior is the same that we obtain for continuous compact groups, cf. Theorem
\ref{thm_moments} and Theorem \ref{t:Z2ctsNonRecon}: the weak recovery problem is solvable only for $d\ge 3$.

\section{Proof of Theorem \ref{thm_moments}}
\label{sec:ProofMoments}

Throughout this section we assume  a probability distribution $\prob$ over $\btheta$, $\bY$ satisfying the unbiasedness condition Eq.~(\ref{eq:Unbiased}).
For most of our analysis, we consider general estimators $T_{uv}:\bY\mapsto T_{uv}(\bY)\in\reals^{m\times m}$ whose output is not necessarily in $\Group$,
and let $\bT_{uv} = T_{uv}(\bY)$ (as projecting them into $\Group$ 
at the end can only increase their accuracy). Also, we set $u(n) = (n,\dots,n)\in\integers^d$, and denote by $\cP_+$ the set of infinite increasing paths in
the grid, that start at $0$.

Throughout the proof, we will use repeatedly the following two elementary facts. First, for any two matrices $\bA,\bB$,$\|\bA\bB\|_F \le \|\bA\|_F\|\bB\|_F$.
Second, if $\bB$ is an orthogonal matrix, then $\|\bA\bB\|_F = \|\bA\|_F$.

We start by defining the estimator $T_{x,y}(\bY)$  for $x=0$, $y=u(n)$, and will then generalize it to other pairs $x,y$.
\begin{lemma}\label{lemma:MomentsFirst}
Consider $d\ge 3$. Then there exists an estimator $T=(T_{u,v})$, and absolute constants $\lambda_0<1$, $C_0$  such that, for all $\lambda>\lambda_0$ and all even $n$,
\begin{align}
\E \big\{\btheta_0\bT_{0,u(n)}\btheta_{u(n)}^{-1}\big\} & = \id_m\, ,\\
\E\Big\{\big\|\btheta_0\bT_{0,u(n)}\btheta_{u(n)}^{-1}-\id_m\big\|_F^2\Big\}&\le C_0\, m (1-\lambda)\, .
\end{align}
\end{lemma}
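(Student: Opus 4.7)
The plan is to construct $\bT_{0,u(n)}$ as a weighted average of normalized path products along \emph{monotone} paths $\gamma$ from $0$ to $u(n)$, then control its second moment via a noise expansion whose key algebraic input is the orthogonality of the gauge-transformed edge variables. I first pass to $\bW_{xy} := \btheta_x \bY_{xy}\btheta_y^{-1} \in \O(m)$, which are conditionally independent across edges (given $\btheta$) with $\E\bW_e = \lambda\,\id_m$. Writing $\bW_e = \lambda\,\id_m + \tilde\bW_e$, the orthogonality identity $\bW_e\bW_e^{\sT}=\id_m$ yields the crucial
\begin{equation*}
\E[\tilde\bW_e\tilde\bW_e^{\sT}] = (1-\lambda^2)\,\id_m,
\end{equation*}
which will be the source of the $(1-\lambda)$ factor in the final bound. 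Every monotone path from $0$ to $u(n)$ has length $dn$; given a probability distribution $w=(w_\gamma)$ on such paths, I set $\bT_{0,u(n)} := \sum_\gamma w_\gamma \lambda^{-dn}\bY_\gamma$ (with $\bY_\gamma$ the ordered product of $\bY_e$ along $\gamma$). Unbiasedness $\E[\btheta_0\bT_{0,u(n)}\btheta_{u(n)}^{-1}]=\id_m$ is immediate from conditional independence and telescoping.

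For the variance, set $\bR := \btheta_0 \bT_{0,u(n)} \btheta_{u(n)}^{-1} = \sum_\gamma w_\gamma \lambda^{-dn}\bW_\gamma$, expand each $\bW_\gamma = \prod_{i=1}^{dn}(\lambda\,\id_m + \tilde\bW_{e_i})$, and group terms by the subset $S\subseteq\gamma$ of noise-insertion edges:
\begin{equation*}
\bR - \id_m \;=\; \sum_\gamma w_\gamma \sum_{\emptyset\ne S\subseteq\gamma}\lambda^{-|S|}\,\tilde\bW(\gamma,S),
\end{equation*}
where $\tilde\bW(\gamma,S)$ is the ordered product of $\tilde\bW_e$, $e\in S$, along $\gamma$. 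Squaring and taking expectation, independence together with $\E\tilde\bW_e=0$ force surviving cross-terms to satisfy $S=S'\subseteq\gamma\cap\gamma'$. The monotone-path choice is essential here: every monotone path containing edge $(x,x+e_i)$ traverses it at step $|x|_1+1$, so the orderings induced on $S$ by $\gamma$ and $\gamma'$ coincide, and the nested product $\tilde\bW_{e_{i_1}}\cdots\tilde\bW_{e_{i_k}}\tilde\bW_{e_{i_k}}^{\sT}\cdots\tilde\bW_{e_{i_1}}^{\sT}$ collapses iteratively, via the identity above, to $(1-\lambda^2)^{|S|}\id_m$. Summing the resulting binomial series in $|S|$ yields
\begin{equation*}
\E\|\bR-\id_m\|_F^2 \;=\; m\,\E_{\gamma,\gamma'\sim w\otimes w}\!\left[\lambda^{-2|\gamma\cap\gamma'|}-1\right].
\end{equation*}

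To conclude, the inequality $\lambda^{-2x}-1\le 2(1-\lambda)\,x\lambda^{-2x}/\lambda_0$ combined with Cauchy--Schwarz bounds the right-hand side by $O\!\bigl(m(1-\lambda)\sqrt{\E X^2\cdot\E\lambda^{-4X}}\bigr)$, where $X := |\gamma\cap\gamma'|$. The identity $\E X = \sum_e \prob_{\gamma\sim w}[e\in\gamma]^2$ identifies $\E X$ with the energy of the unit flow from $0$ to $u(n)$ induced by $w$; choosing $w$ as a path-decomposition of the harmonic flow on the monotone subgrid gives $\E X = R_{\mathrm{mon}}(0,u(n))$, the effective resistance there. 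A direct cross-sectional estimate—the level-$k$ slice of the monotone $d$-grid contains $\Theta(k^{d-1})$ vertices for $k\le n$, so $R_{\mathrm{mon}}(0,u(n)) \lesssim \sum_k k^{-(d-1)} = O(1)$ for $d\ge 3$—then delivers a uniform first-moment bound.

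The main obstacle is extending this first-moment bound to the exponential-moment bound $\E\lambda^{-4X}=O(1)$ uniformly in $n$ for $\lambda$ close enough to $1$, i.e.\ controlling the tail of the shared-edge count $X$ under the harmonic-flow measure $w$. Note that the uniform measure on monotone paths—which would suffice for $d\ge 4$ by transience of the difference bridge in the $(d-1)$-dimensional hyperplane $\{x:\sum_i x_i=0\}$—does \emph{not} achieve the required bound at $d=3$, where $\E X$ already grows like $\log n$. Consequently the harmonic-weighted measure is essential when $d=3$, and controlling its higher-order overlap statistics via a refined flow calculation and standard sub-exponential tail bounds for $X$ is the combinatorial heart of the argument.
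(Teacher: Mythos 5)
Your algebraic reduction is correct and matches the paper's: after gauge-transforming to $\bW_{xy}=\btheta_x\bY_{xy}\btheta_y^{-1}$ and averaging normalized path products over increasing (monotone) paths, the second moment collapses to $m\,\E_{\gamma,\gamma'}\bigl[\lambda^{-2|\gamma\cap\gamma'|}\bigr]$. The paper gets this without the $\lambda\id_m+\tilde\bW_e$ expansion, by the more direct observation that shared edges of two \emph{increasing} paths occur at the same ordinal position, so the inner products $\bY_e\bY_e^{\sT}=\id_m$ cancel exactly while unshared edges contribute $\lambda\id_m$ in expectation. Your binomial rearrangement gives the same endpoint, and you correctly identified that the monotone ordering is essential to the collapse.

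The genuine gap is exactly the one you flag at the end, and it is not a side issue --- it is the entire content of the lemma. You need a probability measure on increasing paths under which the overlap $X=|\gamma\cap\gamma'|$ has an \emph{exponential} tail, since you must control $\E\lambda^{-2X}$ (or $\E\lambda^{-4X}$ after your Cauchy--Schwarz step, which buys you nothing: bounding $\E\lambda^{-4X}$ is no easier than bounding $\E\lambda^{-2X}$). Your effective-resistance argument via a harmonic-flow path decomposition only controls $\E X$, the first moment, and there is no general implication from bounded flow energy to exponential intersection tails. The paper resolves this by invoking the theorem of Benjamini, Pemantle and Peres (``Unpredictable paths and percolation,'' Ann.\ Probab.\ 26 (1998)): for $d\geq 3$ there exists a measure $\mu$ on infinite increasing rays from the origin satisfying $(\mu\times\mu)\{|\gamma_1\cap\gamma_2|\geq k\}\leq C_*\beta_*^k$ for some $\beta_*<1$. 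They then truncate $\mu$ at the hyperplane $\{x_1+x_2+x_3=3n/2\}$ and reflect to produce $\mu_n$ on paths terminating at $u(n)$, inheriting the exponential tail with uniform constants; at that point $\E\lambda^{-2X}\leq 1+C\sum_{k\geq 1}(\beta/\lambda^2)^k$ converges for $\lambda^2>\beta$ and gives the advertised $(1-\lambda^2)$ factor directly, with no Cauchy--Schwarz needed. Unless you can re-derive the BPP exponential intersection property (which is a substantial result --- it is precisely the place where $d\geq 3$ enters), citing it is the missing step. Note also that BPP's measure is \emph{not} the harmonic flow on the monotone grid; it is built specifically to have short-range correlations between consecutive steps, and constructing such a measure is the hard part you deferred as ``the combinatorial heart.''
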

\begin{proof}
Benjamini, Pemantle and Peres \cite{benjamini1998unpredictable} construct a probability measure $\mu$ over paths
in $\cP_+$ satisfying the so called exponential intersection property (EIT). Namely, there exist absolute constants $\beta_*<1$, $C_*$
such that 
\begin{align}
(\mu \times \mu) \{( \gamma_1, \gamma_2) \in  \cP_+\times \cP_+: | \gamma_1 \cap \gamma_2| \ge k \} \le C_* \beta_*^k. \label{EITv}
\end{align}
Let $\cP_{+}(v)$ be the set of increasing paths starting at $0$ and ending at $v\in\integers^d$.
For $n$ even, we construct a probability measure $\mu_n$ over $\cP_+(u(n))$ as follows. 
Define the hyperplane  $H(n) = \{(x_1,x_2,x_3)\in \reals^3:\; x_1+x_2+x_3=3n/2\}$, and let $R_n(x_1,x_2,x_3) = (n-x_1,n-x_2,n-x_3)$
denote the reflection with respect to this hyperplane. For $\gamma\sim \mu$, let $\gamma^{(1,n)}$ denote the path obtained by stopping $\gamma$
when it hits $H(n)$, and denote by $\gamma^{(n)}$ its extension obtained by reflecting the  with respect to  $H(n)$. We let
$\mu_n$ be the probability distribution of $\gamma^{(n)}$ (note that $\gamma^{(n)}$ ends at $u(n)$ by construction). It follows immediately from 
Eq.~(\ref{EITv}) that $\mu_n$ satisfies the EIT for some new absolute constants $C,\beta$, that are independent of $n$:
\begin{align}
\mu_n \times \mu_n \{( \gamma_1, \gamma_2) \in  \cP_+(u(n))\times \cP_+(u(n)): | \gamma_1 \cap \gamma_2| \ge k \} \le C \beta^k. \label{EIT_n}
\end{align}

For a path $\gamma \in \cP_+(u(n))$, denote the ordered sequence of directed edges in $\gamma$ by $I_1(\gamma),\dots,I_{3n}(\gamma)$, where $I_{j}(\gamma) \in E$, 
$j \in [3n]$, and define
\begin{align}
\bY_\gamma  &:=  \bY_{I_1(\gamma)} \bY_{I_2(\gamma)} \cdots \bY_{I_{3n}(\gamma)} \, ,\\
\bT_{0,u} &:= \frac{1}{\lambda^{3n}} \rE_{\gamma}(\bY_\gamma)\, ,
\end{align}
where $\rE_\gamma$ denotes expectation with respect to $\mu_n$. 
Note that by the  assumption (\ref{eq:Unbiased}) we have $\E\bY_{\gamma} = \lambda^{3n}\btheta_0^{-1}\btheta_{u(n)}$ for any $\gamma\in \cP_+(u(n))$
and therefore
\begin{align}
\E\bT_{0,u(n)} = \btheta_0^{-1}\btheta_{u(n)}\, .\label{eq:exp}
\end{align}
Observe that if two paths $\gamma_1,\gamma_2$ in $\cP_+(u(n))$ intersect in an
edge $e$ then they must intersect in the same position since the paths are
increasing, i.e.  we must have $e=I_k(\gamma_1)=I_k(\gamma_2)$ for some $k$.
Writing for simplicity $u=u(n)$, and denoting by $\rE_{\gamma_1,\gamma_2}$ expectation with respect to $\gamma_1,\gamma_2\sim_{iid}\mu_n$
\begin{align*}
\E \,  \big\{\bT_{0,u} \bT_{0,u}^{\sT}\big\} &=  \frac{1}{\lambda^{6n}} \, \rE_{\gamma_1,\gamma_2}\E \bY_{\gamma_1} (\bY_{\gamma_2})^\sT\\
&=  \frac{1}{\lambda^{6n}} \,
\rE_{\gamma_1,\gamma_2}\E \bY_{I_1(\gamma_1)}\ldots 
	(\E\bY_{I_{3n}(\gamma_1)} \bY_{I_{3n}(\gamma_2)}^\sT)
	\bY_{I_{3n-1}(\gamma_2)}^\sT\ldots \bY_{I_1(\gamma_2)}^\sT\\
&\stackrel{(a)}{=}  \frac{1}{\lambda^{6n} }  \rE_{\gamma_1,\gamma_2}\lambda^{|\gamma_1|+|\gamma_2| - 2|\gamma_1 \cap \gamma_2|} \bI_m\\
&=   \rE_{\gamma_1,\gamma_2}\lambda^{ - 2|\gamma_1 \cap \gamma_2|} \, \bI_m\,.\label{var}
\end{align*}
where $(a)$ follows by repeatedly applying the identity $\bY_e\bY_e^\sT=I_m$ for any edge $e$, each time an intersection appears, and taking expectation with respect to 
$\bY_{e_1}$, $\bY_{e_2}$ for not repeated edges. 
By this last expression, the trace $\tau$ of $\E \,  \big\{\bT_{0,u} \bT_{0,u}^{\sT}\big\} $ reads $\tau=m\;\E\left(\lambda^{-2X}\right)$ where $X$ is a random variable counting the number of intersections in two paths $\gamma_1$, $\gamma_2$ independently drawn from $\mu_n$. Thus for $\lambda^2 > \beta$, 
\begin{align}
m^{-1}\tau & =  \sum_{x\ge 0} \lambda^{-2x}\left[\prob(X\ge x)-\prob(X\ge x + 1)\right]\\
&=1+\sum_{x>0}\prob(X\ge x)\left[\lambda^{-2x}-\lambda^{-2x+2}\right]\\
&\le 1+(1-\lambda^2)\sum_{x>0}C \left(\beta/\lambda^2\right)^{x}\\
&= 1+(1-\lambda^2)\frac{C\beta}{\lambda^2 - \beta},
\end{align}
where the inequality follows from Eq.~(\ref{EIT_n}). Thus 
\begin{align}
\E\Big\{\big\|\btheta_0\bT_{0,u}\btheta_{u}^{-1}-\id_m\big\|_F^2\Big\}&=\hbox{Tr }\E\left\{\btheta_0\bT_{0,u}\bT_{0,u}^{\sT}\btheta_0^{\sT}\right\}-
2\hbox{Tr }\E\left\{\btheta_0\bT_{0,u}\btheta_{u}^{-1}\right\}+m\\
&= \tau-m\\
&\le (1-\lambda^2)m\frac{C\beta}{\lambda^2 - \beta},
\end{align}
where we used Eq.~\eqref{eq:exp} together with our previous bound on $\tau$. The second statement of the Lemma follows. 
%
%
\end{proof}

\begin{lemma} \label{lemma:MomentsFourth}
Consider any  $d\ge 3$ and fix $\eps>0$.  For $n\in\naturals$, $j\in \{1,\dots, d\}$, let $v(j,n) \equiv n\, e_j$
Then there exists an estimator $T=(T_{u,v})_{u,v\in V}$, and  a constant $\lambda(\eps)<1$,  such that, for all $\lambda>\lambda(\epsilon)$ and all $n$,
\begin{align}
\prob\Big\{\big\|\btheta_0\bT_{0,v(j,n)}\btheta_{v(j,n)}^{-1}-\id_m\big\|_F\ge \eps\Big\}\le \eps\, .
\end{align}
\end{lemma}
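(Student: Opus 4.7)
The plan is to reduce Lemma~\ref{lemma:MomentsFourth} to Lemma~\ref{lemma:MomentsFirst} through a two-leg path decomposition followed by Chebyshev's inequality. Two new features have to be handled: the endpoint $v(j,n)=ne_j$ lies along a coordinate axis rather than on the full diagonal $u(n)$, and one needs a tail bound rather than just a second-moment bound.

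The first step is to decompose the displacement as $v(j,n)=w_1+w_2$, where $w_1=u(\lfloor n/2\rfloor)$ is the positive diagonal and $w_2=v(j,n)-w_1$ is the vector whose $j$-th coordinate is $\approx +n/2$ and whose remaining coordinates are $\approx -n/2$. Both $w_1$ and $w_2$ are balanced, in the sense that all their coordinates have magnitude of order $n$, which is precisely the structural property that made the construction of Lemma~\ref{lemma:MomentsFirst} work. For $w_1$ that lemma applies directly and furnishes an estimator $\bT^{(1)}$. For $w_2$ one invokes the grid automorphism $\sigma$ that flips the sign of every coordinate $i\neq j$: under $\sigma$ the displacement $w_2$ becomes $+u(\lfloor n/2\rfloor)$ and a traversal of the reversed edge $(x,x-e_i)$ contributes a factor $\bY_e^{-1}=\bY_e^{\sT}$. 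Since $\sigma$ is an isometry of $\integers^d$, the BPP measure and the EIT bound \eqref{EIT_n} are preserved with identical constants, so Lemma~\ref{lemma:MomentsFirst} applied after the pullback gives an estimator $\bT^{(2)}$ for the second leg satisfying the analogous second-moment bound.

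Next, the two leg measures are placed on disjoint edge sets by inserting a single $+e_j$ bridging edge between them, confining the first leg's paths to the hyperslab $\{x_j\le \lfloor n/2\rfloor\}$ and the second leg's to $\{x_j\ge \lfloor n/2\rfloor+1\}$. Setting $\bT_{0,v(j,n)}:=\bT^{(1)}\,(\bY_{\mathrm{bridge}}/\lambda)\,\bT^{(2)}$, the error matrix
\[
\bR := \btheta_0\,\bT_{0,v(j,n)}\,\btheta_{v(j,n)}^{-1}
\]
factorises as a product of three independent random matrices, each with identity mean and individual variance $O(m(1-\lambda))$. Expanding $\bR-\id$ into its $2^3-1=7$ cross terms and repeatedly using $\|AB\|_F\le\|A\|_F\|B\|_F$ together with independence yields
\[
\E\|\bR-\id\|_F^2 \le C_1\, m(1-\lambda)
\]
as soon as $\lambda$ is close enough to $1$. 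Chebyshev's inequality then gives
\[
\prob(\|\bR-\id\|_F\ge\eps)\le \frac{C_1 m(1-\lambda)}{\eps^2},
\]
which is at most $\eps$ upon choosing $\lambda(\eps):=1-\eps^3/(C_1 m)$.

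The main bookkeeping obstacle lies in the second paragraph: checking that the signed-diagonal measure really inherits EIT with the same constants as in Lemma~\ref{lemma:MomentsFirst} (the intersection-cancellation argument $\bY_e\bY_e^{\sT}=\id$ still applies because two paths sharing an undirected edge necessarily traverse it in the same signed direction), ensuring the two legs can be taken edge-disjoint, and absorbing the parity of $n$ into an $O(1)$ correction at one end. All three reduce to symmetries of $\integers^d$ under sign-flips and translations but need to be spelled out explicitly.
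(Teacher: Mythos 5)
Your decomposition through an intermediate vertex on a diagonal, with the second leg handled by a coordinate sign-flip isometry of $\integers^d$, is essentially the paper's approach. What differs is the assembly step, and yours is more elaborate than necessary. The paper simply sets $\bT_{0,v(n)} = \bT^{(*)}_{0,w(n)} \bT^{(*)}_{w(n),v(n)}$ with $w(n)=(n/2,n/2,n/2,0,\dots,0)$, without imposing edge-disjointness or independence between the two legs, and then invokes the submultiplicative Frobenius inequality
\[
1+\|\bX_1\bX_2-\id\|_F\le (1+\|\bX_1-\id\|_F)(1+\|\bX_2-\id\|_F).
\]
Lemma~\ref{lemma:MomentsFirst} plus Markov (not Chebyshev) gives a $\delta$-tail bound on $\|\btheta_0\bT^{(*)}_{0,w(n)}\btheta_{w(n)}^{-1}-\id\|_F$ and on $\|\btheta_{w(n)}\bT^{(*)}_{w(n),v(n)}\btheta_{v(n)}^{-1}-\id\|_F$ individually; a union bound then controls both simultaneously, and the product inequality closes the argument with $(1+\delta)^2-1\le 3\delta$. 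No independence between legs, no bridging edge, and no computation of $\E\|\bR-\id\|_F^2$ for the product are needed. Your route---hyperslab confinement, a $+e_j$ bridge, independence of three factors, and a second-moment bound on the product---is correct (the factored second-moment computation does go through, because each factor has $\E[A_iA_i^\sT]$ a multiple of the identity), but it buys nothing extra and costs you the bookkeeping you rightly flag in your final paragraph. One small correction: you take the midpoint to be the full $d$-dimensional diagonal $u(\lfloor n/2\rfloor)$, but the construction in Lemma~\ref{lemma:MomentsFirst} is built from the BPP measure on increasing paths in $\integers^3$, so the midpoint should be placed on a $3$-dimensional diagonal inside a coordinate $3$-subgrid, as the paper does with $w(n)$. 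Taking a full $d$-diagonal makes the second leg's displacement have $d-1$ negative coordinates of order $n/2$, which no longer matches the $3$-step-per-level BPP path structure; restrict to a $3$-subgrid (the remaining coordinates fixed at $0$) and the argument aligns with the paper's.
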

\begin{proof}
Without loss of generality, assume $j=1$, and set for simplicity $v(n) = v(j,n)$. Consider first the case of $n$ even and let $w(n) \equiv (n/2,n/2,n/2,0,\dots,0)$.
Let $(\bT^{(*)}_{x,y})_{x,y\in V}$ be the estimator of Lemma \ref{lemma:MomentsFirst} (where we use only the observations on the subgraph induced by the hyperplane
$\{x \in \integers^d:\; x_4=\dots=x_d=0\}$). Define 
\begin{align}
\bT_{0,v(n)} = \bT^{(*)}_{0,w(n)} \bT^{(*)}_{w(n),v(n)}\, . 
\end{align}
From the inequality 
\begin{equation}\label{eq:prod_frob}
1+\|\bX_1\bX_2-\id\|_F\le (1+\|\bX_1-\id\|_F)(1+\|\bX_2-\id\|_F),
\end{equation}
 we get
$$
\|\btheta_0 \bT_{0,v(n)} \btheta_{v(n)}^{-1}-\id\|_F\le \left(1+\|\btheta_0 \bT^{(*)}_{0,w(n)}\btheta^{-1}_{w(n)}-\id\|_F\right)  \left(1+ \|\btheta_{w(n)}\bT^{(*)}_{w(n),v(n)}   \btheta_{v(n)}^{-1}-\id\|_F \right)-1.
$$
By Lemma \ref{lemma:MomentsFirst} and Markov's inequality, the probability that one of the Frobenius norms in the right-hand side exceeds $\delta>0 $ is at most $C_0m(1-\lambda)/\delta^2$. 
Thus with probability at least $1-2C_0m(1-\lambda)/\delta^2$, one has 
$$
\|\btheta_0 \bT_{0,v(n)} \btheta_{v(n)}^{-1}-\id\|_F\le (1+\delta)^2-1.
$$
The right-hand side is at most $3\delta$ for $\delta\le 1$. The announced result follows for the choice
$\lambda(\eps)=1-\eps^3/(18 m C_0)$.
\end{proof}

%
%

We can now prove our main result, that is a strengthening of Theorem \ref{thm_moments}.
\begin{theorem}
Consider any  $d\ge 3$ and fix $\eps>0$.  
Then there exists an estimator $T=(T_{u,v})_{u,v\in V}$, and  a constant $\lambda_d(\eps)<1$,  such that, for all $\lambda>\lambda_d$ and all $n$,
\begin{align}
\prob\Big\{\big\|\btheta_x\bT_{x,y}\btheta_{y}^{-1}-\id_m\big\|_F\ge \eps\Big\}\le \eps\, .
\end{align}
\end{theorem}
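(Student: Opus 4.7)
The plan is to leverage Lemma~\ref{lemma:MomentsFourth}, which controls axis-aligned pairs $(0,v(j,n))$, together with the Frobenius submultiplicative bound \eqref{eq:prod_frob} to build an estimator for arbitrary pairs $(x,y)\in V\times V$. By translation invariance of the grid and of the observation law (the edge observations are i.i.d.\ given $\btheta$ with a distribution that only depends on $\btheta_x^{-1}\btheta_y$), I may assume $x=0$. Write $y=\sum_{i=1}^{d} y_i e_i$ and introduce the axis-aligned staircase $p^{(0)}=0$, $p^{(i)} = p^{(i-1)}+y_ie_i$, so $p^{(d)}=y$. For each non-trivial leg (those with $y_i\ne 0$), I build an estimator $\bT^{(i)}$ of $\btheta_{p^{(i-1)}}^{-1}\btheta_{p^{(i)}}$ by applying Lemma~\ref{lemma:MomentsFourth} in the $e_i$-direction within a 3-dimensional subgrid containing the segment. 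If $y_i>0$, this is a direct translation of the pair $(0,v(i,|y_i|))$ to $(p^{(i-1)},p^{(i)})$. If $y_i<0$, I apply the lemma to the reversed pair and take the transpose of the resulting matrix, using that $\Group\subseteq\O(m)$ and that $\bY_e^{\sT}=\bY_e^{-1}$ is a $\lambda$-unbiased estimator of $\btheta_{\mathrm{head}(e)}^{-1}\btheta_{\mathrm{tail}(e)}$ (by taking the transpose of \eqref{eq:Unbiased}); the identity $(\btheta_x\bM\btheta_y^{-1})^{\sT} = \btheta_y\bM^{\sT}\btheta_x^{-1}$, valid because $\btheta_x,\btheta_y$ are orthogonal, shows that this transposition preserves the relevant Frobenius distance to $\id_m$. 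In either sign, Lemma~\ref{lemma:MomentsFourth} yields
\begin{align*}
\prob\Bigl(\bigl\|\btheta_{p^{(i-1)}}\bT^{(i)}\btheta_{p^{(i)}}^{-1} - \id_m\bigr\|_F \ge \delta\Bigr) \le \delta
\end{align*}
for every $\lambda > \lambda(\delta)$, uniformly in $|y_i|$.

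Now set $\bT_{0,y} := \bT^{(1)}\bT^{(2)}\cdots\bT^{(d)}$ (with $\id_m$ inserted for empty legs). Writing $\eps_i := \|\btheta_{p^{(i-1)}}\bT^{(i)}\btheta_{p^{(i)}}^{-1}-\id_m\|_F$, iterating the submultiplicative inequality \eqref{eq:prod_frob} gives, almost surely,
\begin{align*}
\bigl\|\btheta_0\bT_{0,y}\btheta_y^{-1}-\id_m\bigr\|_F \;\le\; \prod_{i=1}^{d}(1+\eps_i)-1.
\end{align*}
Choose $\delta := \eps/(2d)$ and apply Lemma~\ref{lemma:MomentsFourth} with tolerance $\delta$ to each of the $d$ legs. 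A union bound gives $\max_i \eps_i\le\delta$ with probability at least $1-d\delta = 1-\eps/2$, on which event the product bound yields $(1+\delta)^d-1 \le e^{d\delta}-1 \le 2d\delta = \eps$ (valid for $\eps\le 2$). Setting $\lambda_d(\eps):=\lambda(\eps/(2d))$ with $\lambda(\cdot)$ from Lemma~\ref{lemma:MomentsFourth} yields the claim in the nontrivial regime; for $\eps>2$ the estimate is automatic after projecting $\bT_{0,y}$ onto $\Group$, as allowed by the remark preceding Lemma~\ref{lemma:MomentsFirst}.

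The main obstacle is conceptual rather than technical: one must recognize that the arbitrary-displacement problem decouples into $d$ axis-aligned sub-synchronizations, each separately handled at a constant accuracy by Lemma~\ref{lemma:MomentsFourth}, and that the geometric blow-up $\prod_i(1+\eps_i)$ is controllable by picking $\delta$ small in terms of $d$. A subtlety worth flagging is that \emph{independence} of the $d$ leg estimators is not needed: the Frobenius product bound is a deterministic inequality, so the union bound over the rare failure events $\{\eps_i>\delta\}$ is enough, which in turn frees us from having to arrange disjoint observation subgrids across legs. The transposition trick for $y_i<0$ is the only place where the orthogonality $\Group\subseteq \O(m)$ is used beyond Lemma~\ref{lemma:MomentsFourth} itself.
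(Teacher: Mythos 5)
Your proposal is correct and follows essentially the same route as the paper: decompose the displacement $y$ into the $d$ axis-aligned legs $w(j-1)\to w(j)$, apply Lemma~\ref{lemma:MomentsFourth} to each leg, multiply the leg estimators, and control the product via iterated use of~\eqref{eq:prod_frob} together with a union bound over the rare failure events. Your version additionally spells out the transposition trick for legs with $y_i<0$ (which the paper leaves implicit) and uses the slightly sharper tolerance $\delta=\eps/(2d)$ in place of the paper's $\eps_0=\eps/2^d$, but these are cosmetic refinements rather than a different argument.
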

\begin{proof}
Without loss of generality, assume $x = 0$. Further, for $j\in \{0,\dots,d\}$ define $w(j)\equiv (y_1,\dots, y_j,0,\dots,0)$. 
In particular, $w(0) = 0$ and $w(d) = y$.
Let $(\bT^{(\#)}_{x,y})$ be the estimator of Lemma \ref{lemma:MomentsFourth}, and  define
\begin{align}
\bT_{0,y} = \bT^{(\#)}_{w(0),w(1)}\bT^{(\#)}_{w(1),w(2)}\cdots \bT^{(\#)}_{w(d-1),w(d)}\, .
\end{align}
By Lemma \ref{lemma:MomentsFourth}, for all $\lambda>\lambda(\eps_0)$ we have
\begin{align}
\prob\Big(\max_{1\le j\le d}\big\|\btheta_{w(j-1)}\bT^{(\#)}_{w(j-1),w(j)} \btheta_{w(j)}^{-1} - \id_m\big\|_F\ge \eps_0\Big)\le d\,\eps_0\, .
\end{align}
By repeated application of Inequality~\eqref{eq:prod_frob}, on the complement of the event in the right-hand side, one has
$$
\| \btheta_0\bT_{0,y}\btheta_y^{-1}  -\id_m\|  \le \prod_{j=1}^d \left(\|\btheta_{w(j-1)}\bT^{(\#)}_{w(j-1),w(j)} \btheta_{w(j)}^{-1}-\id_m   \|   +1\right)-1\le \eps_0 2^d.
$$

The claim follows by taking $\eps_0=\eps/(2^d)$ and by setting $\lambda_d(\eps)=\lambda(\eps/2^d)$.
\end{proof}

\section{Proof of Theorem \ref{t:Z2NoiseZ2}}
\label{sec:ProofZ2}

We give a multi-scale scheme to reconstruct  the unknowns $\btheta =
(\btheta_x)_{x\in \integers_2}$.  Without loss of generality we will consider pairs of vertices $u,v$ in the positive quadrant.
For $k\geq 0$ let $\ell_k = 2^{10k(k+1)}$.  We partition the lattice $\integers^2$ into blocks of side-length $\ell_k$ as follows,
\begin{align}
B_{u}^{(k)} = \{(x_1,x_2)\in \integers^2: u_i=\lceil x_i/\ell_k \rceil\}
\end{align}
Let $\cB^{(k)}$ be the set of blocks at level $k$ and let $D_{u,k}$ denote the unique block in $\cB^{(k)}$ containing $u$.  For each block $B \in \cB^{(k)}$ we will define synchronization random variables $W_B^{(k)}\in \{-1,1\}$ that are measurable with respect to $\{\bY_{xy}\}_{x,y\in B}$.  Our estimate for $\btheta_u \btheta_v^{-1}$ is $\prod_{k\geq 0} W_{D_{u,k}}^{(k)} W_{D_{v,k}}^{(k)}$.  For some large enough $k_\star$ we have that $D_{u,k_\star} = D_{v,k_\star}$ and so $W_{D_{u,k}}^{(k)} W_{D_{v,k}}^{(k)}=1$ for all $k\geq k_\star$.  The product of synchronization variables at $u$ up to level $k$ will be denoted as
\begin{align} 
\tilde{W}_{u}^{(k)}= \prod_{\ell=1}^{k} W_{D_{u,\ell}}^{(\ell)}\, .
\end{align}

We say that two blocks $B, B' \in \cB^{(k)}$ are adjacent (denoted $B \sim B'$) if there exist $x\in B, x\in B'$ such that $(x,x')\in E$.  
In this case there are exactly $\ell_k$ such pairs.  We say that $B\sim B'$ is an honest edge  if the following event holds
\begin{align}
\cA^{(k)}(B,B') = \bigg\{\sum_{x\in B,x'\in B'} \bY_{xx'}\btheta_x \btheta_{x'} \geq \frac{9}{10}\ell_k\bigg\} \, .
\end{align}
This condition will mean that edges between vertices along the cut will be informative as we try to synchronize them.

Next we recursively define the set of good level $k$ blocks $\cG^{(k)}$.  A block $B\in \cB^{(k)}$ is \emph{good} if 
\begin{itemize}
  \item There is at most one bad  $(k-1)$-level sub-block of $B$, that is
  \begin{align}
  \left|\left\{B_i \in \cB^{(k-1)}: B_i \subset B, B_i \not\in \cG^{(k-1)} \right\}\right| \leq 1\, .
  \end{align}
  \item All level $k-1$ sub-block edges are honest, 
  \begin{align}
  \bigcap_{\substack{B_1,B_2 \in \cB^{(k-1)}\\ B_1,B_2 \subset B,\; B_1\sim B_2}} \cA^{(k-1)}(B_1,B_2)\, .
  \end{align}
\end{itemize}

\begin{claim}
There exists $p_\star > 0$ such that, if $0<p<p_\star$ then for all $B \in \cB^{(k)}$
\begin{equation}\label{e:goodBound}
\prob(B\in \cG^{(k)}) \geq 1 - 2^{-200k-200}.
\end{equation}
\end{claim}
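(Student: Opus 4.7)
The plan is to proceed by induction on $k$. The base case $k=0$ is immediate: level-$0$ blocks are single vertices (since $\ell_0=1$), so the two conditions defining goodness are vacuously true and $\prob(B\in\cG^{(0)})=1 \geq 1 - 2^{-200}$.

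For the inductive step at level $k\geq 1$, I would decompose $\{B\notin\cG^{(k)}\}$ via a union bound into two failure modes: (a) at least two of the $N:=(\ell_k/\ell_{k-1})^2 = 2^{40k}$ sub-blocks of $B$ are bad, and (b) some honest-edge event $\cA^{(k-1)}(B_1,B_2)$ fails between a pair of adjacent sub-blocks. To control (a), the key observation is that in the $\integers_2$-synchronization model the variables $\xi_{xy} := \bY_{xy}\btheta_x\btheta_y$ are i.i.d.\ $\pm 1$ with $\prob(\xi_{xy}=-1)=p$ (independent of $\btheta$), so unwinding the recursion shows that $\{B_i\in\cG^{(k-1)}\}$ is measurable with respect to the $\xi_{xy}$'s for edges internal to $B_i$. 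Hence good-events for distinct sub-blocks of $B$ are mutually independent, and a union bound over pairs combined with the inductive hypothesis $\prob(B_i \notin \cG^{(k-1)}) \leq 2^{-200k}$ gives
$$\prob(\text{(a)}) \leq \binom{N}{2}\bigl(2^{-200k}\bigr)^2 \leq 2^{80k-1}\cdot 2^{-400k} = 2^{-320k-1},$$
which for $k\geq 2$ lies comfortably below $\tfrac{1}{2}\cdot 2^{-200k-200}$.

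For (b), each honest-edge event is a large-deviation event for a sum of $\ell_{k-1}$ i.i.d.\ $\pm 1$ variables of mean $1-2p$; for $p<1/20$ the mean strictly exceeds $9/10$, so Hoeffding's inequality yields $\prob(\neg\cA^{(k-1)}(B_1,B_2))\leq \exp(-c_p\ell_{k-1})$ for some $c_p>0$. Since the number of adjacent sub-block pairs in $B$ is at most $2^{40k+1}$ and $\ell_{k-1}=2^{10(k-1)k}$ grows doubly-exponentially in $k$, this contribution is negligible for $k\geq 2$.

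The main obstacle is the finest scale $k=1$, where $\ell_0=1$ and no Hoeffding-type concentration is available: each level-$0$ honest-edge event involves a single observation and fails with probability exactly $p$, while there are $\leq 2^{41}$ such events inside a level-$1$ block. This forces a very small choice of $p_\star$ (for instance $p_\star \leq 2^{-443}$) so that $2^{41}p_\star \leq 2^{-400}$, establishing $\prob(B\notin\cG^{(1)})\leq 2^{-400}$. Once this base case is secured, the slack between $2^{-320k-1}$ and $2^{-200k-200}$ grows linearly in $k$ and the induction closes with room to spare.
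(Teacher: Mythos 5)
Your proof is correct and follows essentially the same route as the paper's: induction on $k$, a union bound splitting failure into ``two or more bad sub-blocks'' and ``some dishonest edge,'' a binomial/Hoeffding tail estimate for the honest-edge events, and $\binom{2^{40k}}{2}(2^{-200k})^2$ for the bad-sub-block count, with $p_\star$ chosen small enough to cover the finitely many small-$k$ cases. The only substantive addition is your explicit justification of independence of goodness events across disjoint blocks via the gauge variables $\xi_{xy}=\bY_{xy}\btheta_x\btheta_y$, which the paper asserts without comment; that is a welcome clarification but not a different argument.
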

\begin{proof}
We will establish~\eqref{e:goodBound} inductively.  Note that blocks at level 0 are good.  First we estimate the probability that the honest edge condition holds.  Assuming that $p_\star \leq \frac1{40}$,
\begin{align*}
\prob\left(\cA^{(k-1)}(B_1,B_2)\right) &= \prob\left( \hbox{Bin}(\ell_{k-1},1-p) \geq \frac{9}{10} \ell_{k-1}\right)\\
&\ge \prob\left( \hbox{Bin}(\ell_{k-1},\frac{39}{40}) \geq \frac{9}{10} \ell_{k-1}\right) \ge 1- \exp\left(-\kappa 2^{10k(k-1)}\right)
\end{align*}
for some $\kappa >0$.  Thus
\begin{equation}\label{e:edgeHonestBound}
\prob\left(\cA^{(k-1)}(B_1,B_2)\right) \ge 1- 2^{-400k-800}
\end{equation}
for all sufficiently large $k$.  By taking $p_\star$ small enough equation~\eqref{e:edgeHonestBound} holds for small $k$ as well and thus for all $k$.  Hence, since there are $2^{40k}$ level $k-1$ sub-blocks in each level $k$ block we have that,
\begin{equation}\label{e:edgeHonestUnionBound}
\prob\left(\bigcap_{\substack{B_1,B_2 \in \cB^{(k-1)}\\ B_1,B_2 \subset B}} \cA^{(k)}(B_1,B_2)\right) \geq 1-2^{40k+1} \cdot 2^{-400(k-1)-800} \geq 1-2^{-200k-201} \, .
\end{equation}
Since there are no bad sub-blocks at level 0 this implies~\eqref{e:goodBound} for $k=1$.
For some $k\geq 2$, assume inductively that equation~\eqref{e:goodBound} holds up to $k-1$.  Then, since the event that blocks are good are independent, for $B \in \cB^{(k)}$,
\begin{align*}
\prob\left(\left|\left\{B' \in \cB^{(k-1)}: B' \subset B, B' \notin \cG^{(k-1)} \right\}\right| \geq 2\right) 
&= \prob\left( \hbox{Bin}(2^{40k},2^{-200(k-1)-200}) \geq 2 \right)\\
&\leq {2^{40k} \choose 2} (2^{-200k})^2 \leq 2^{-320k} \leq 2^{-200k-240}\, .
\end{align*}
Combining with equation~\eqref{e:edgeHonestUnionBound} we have that
\[
\prob(B\in \cG^{(k)}) \geq 1 - 2^{-200k-200} \, ,
\]
as required.
\end{proof}

Next we describe how to inductively construct the synchronization variables $W_{B}^{(k)}$ in a $k+1$ block $B^*$.  For $B_1\sim B_2$ $k$-level sub-blocks of $B^*$ we let 
\[
\bY_{B_1,B_2} = \mathrm{sign}\left(\sum_{\substack{B_1\ni x \sim y\in B_2}} \tilde{W}_{x}^{(k)} \tilde{W}_{y}^{(k)} \bY_{xy}\right)
\]
We assign the $W_{B}^{(k)}$ as follows:
\begin{enumerate}
  \item A quartet is a collection of 4 sub-blocks $B_1\sim B_2\sim B_3\sim B_4\sim B_1$ that form a square of side length $2\ell_k$.  A quartet is incoherent if $\prod_{i=1}^{4} \bY_{B_i,B_{i+1}}=-1$ where we take $B_5 = B_1$.  Let $\cI^{(k)}_{B^*}$ be the set of sub-blocks of $B^*$ that appear in no incoherent quartets.  It is possible for $\cI^{(k)}_{B^*}$ to be disconnected, in that case take $\cI^{(k)}_{B^*}$ to be the largest component.
  \item If possible, assign $W_{B}^{(k)}$ for all $B\in \cI^{(k)}_{B^*}$  such that for all adjacent sub-blocks $B_1, B_2\in \cI^{(k)}_{B^*}$ we have that 
\begin{equation}\label{e:synchCondition}
        W_{B_1}^{(k)}W_{B_2}^{(k)}=\bY_{B_1,B_2}  
      \end{equation}
      Denote the event that such an assignment is possible as $\cH^{(k+1)}_{B^*}$.  If such an assignment is not possible set all the $W_{B}^{(k)}=1$.  Set $W_{B}^{(k)}=1$ for 
all $B\in (\cI^{(k)}_{B^*})^c$.
\end{enumerate}
In the following we will write $\cI = \cI^{(k)}= \cI^{(k)}_{B^*}$ omitting arguments when clear from the context.
Note that on the event $\cH^{(k+1)}_{B^*}$, the $W_{B}^{(k)}$ can be found efficiently by assigning the variables iteratively to satisfy equation~\eqref{e:synchCondition}.

\begin{claim}\label{claim2}
For $k\geq1$, if $B \in \cG^{(k)}$ is good then the following hold:
\begin{enumerate}
\item $\cH^{(k)}_{B}$ holds.  
\item There exists a random variable $S^{(k)}_B \in\{-1,1\}$ such that if $x\in B$ and on the event
\begin{align}
\bigcap_{j=0}^{k-1} \Big\{\{D_{x,j} \in\cG^{(j)}\}\cap \{D_{x,j} \in \cI^{(j)}\}\Big\}
\end{align}
we have that
\begin{equation}\label{e:recoveryS}
\btheta_x = S^{(k)}_B \tilde{W}_{x}^{(k)} \, .
\end{equation}
\item Furthermore, for any $B'\in \cG^{(k)}$ with $B' \sim B$,
\begin{equation}\label{e:goodEdge}
\sum_{\substack{x \in B\cap \partial B'}} S^{(k)}_B \tilde{W}_{x}^{(k)}\btheta_x \geq (1-2^{-8} +2^{-10k} )\ell_k\, .
\end{equation}
(Here $\partial B' \equiv \{x\in\integers^2:\, {\rm dist}(x,B')=1\}$.)
\end{enumerate}
\end{claim}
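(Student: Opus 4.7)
The plan is to prove all three parts of the claim by induction on $k$. For the base case $k=1$, a good level-1 block $B\in\cG^{(1)}$ has all internal edges honest, so $\bY_{xy}=\btheta_x\btheta_y$ for every edge inside $B$. Every quartet is thus coherent, $\cI^{(0)}_B$ equals all of $B$, and the sync constraints $W^{(0)}_{\{x\}}W^{(0)}_{\{y\}}=\bY_{xy}$ admit the solution $W^{(0)}_{\{x\}}=\sigma\btheta_x$ for an arbitrary global sign $\sigma$. Setting $S^{(1)}_B:=\sigma W_B^{(1)}$ yields parts 2 and 3 directly, with the boundary sum in 3 equal to exactly $\ell_1$.

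For the inductive step, fix $B\in\cG^{(k)}$ with level-$(k-1)$ sub-blocks $B_1,\ldots,B_M$, at most one of which is bad. Part 1 follows from planarity: every quartet contained in $\cI^{(k-1)}_B$ is coherent by construction, and face cycles generate the cycle space of a planar subgraph, so the sync system $W^{(k-1)}_{B_i}W^{(k-1)}_{B_j}=\bY_{B_i,B_j}$ admits a consistent assignment on $\cI^{(k-1)}_B$. The heart of the proof is the deterministic identity
\[
\bY_{B_i,B_j}=\sigma_i\sigma_j,\qquad \sigma_i:=W^{(k-1)}_{B_i}\,S^{(k-1)}_{B_i},
\]
for every adjacent pair of good sub-blocks $B_i,B_j\in\cI^{(k-1)}_B$. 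To establish it, invoke part 3 of the inductive hypothesis on $B_i$ and $B_j$ to conclude that at most $2^{-9}\ell_{k-1}$ boundary vertices on each side of the interface fail $\tilde{W}^{(k-2)}_x=\sigma_i\btheta_x$; together with the honest-edge bound (at most $\ell_{k-1}/20$ dishonest edges), a strict majority of the $\ell_{k-1}$ summand terms in the definition of $\bY_{B_i,B_j}$ equal $\sigma_i\sigma_j$, pinning down the sign. Combining this identity with the sync relation forces $S^{(k-1)}_{B_i}=S^{(k-1)}_{B_j}$ for every such pair; since every incoherent quartet must contain the unique bad sub-block (a good quartet with honest edges is automatically coherent by the identity), at most $O(1)$ sub-blocks lie outside $\cI^{(k-1)}_B$, so $\cI^{(k-1)}_B$ stays connected and all its good members share a common value $S^*$. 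Defining $S^{(k)}_B:=W_B^{(k)}S^*$, part 2 follows from $\tilde{W}_x^{(k)}=W_B^{(k)}\tilde{W}_x^{(k-1)}$ together with the inductive part 2 applied to $B_i=D_{x,k-1}$.

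For part 3, note the unconditional algebraic equality $S^{(k)}_B\tilde{W}^{(k)}_x\btheta_x=S^*\tilde{W}^{(k-1)}_x\btheta_x$. Partition $B\cap\partial B'$ into the $2^{20k}$ segments $B_i\cap\partial B_j$ indexed by adjacent level-$(k-1)$ sub-blocks $B_i\subset B$ and $B_j\subset B'$; each segment on which both sub-blocks are good and lie in their respective $\cI$'s contributes at least $c_{k-1}\ell_{k-1}$ by the inductive part 3, while the $O(1)$ remaining ``bad'' segments contribute at worst $-\ell_{k-1}$. Thus the total is at least $(2^{20k}c_{k-1}-C)\ell_{k-1}$ for an absolute constant $C$. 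Since $c_{k-1}-c_k=(2^{10}-1)2^{-10k}$, the per-$\ell_k$ deficit $C/2^{20k}$ is dominated by this gap, closing the induction.

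The main obstacle is the deterministic identity $\bY_{B_i,B_j}=\sigma_i\sigma_j$: one cannot rely on part 2 at level $k-1$ (which holds only on a conditioning event) and must instead use the quantitative boundary bound from part 3, then carefully verify that the cumulative error budget---boundary errors plus dishonest edges plus the few excluded sub-blocks---stays strictly below the majority threshold at every scale. The specific choice of block blow-up $\ell_k/\ell_{k-1}=2^{20k}$ and of the constants in $c_k$ are precisely tuned so that this bookkeeping closes.
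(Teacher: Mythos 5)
Your proposal follows the paper's proof strategy closely: same multi-scale induction, same use of the inductive part~3 bound together with the honest-edge event to control the majority vote defining $\bY_{B_i,B_j}$, same definition of $S^{(k)}_B$ by telescoping, and the same tuning of $\ell_k/\ell_{k-1}=2^{20k}$ against the geometric decay of $c_k - c_{k-1}$ to close part~3. The substance is right and the error budgeting is correct.

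There is, however, a gap in your argument for part~1, and one of your intermediate steps is stated in a way that is circular. You assert that part~1 ``follows from planarity: every quartet contained in $\cI^{(k-1)}_B$ is coherent by construction, and face cycles generate the cycle space of a planar subgraph, so the sync system admits a consistent assignment on $\cI^{(k-1)}_B$.'' The second half of this is true for \emph{all} bounded faces, but after removing the sub-blocks lying in incoherent quartets, the induced graph on $\cI$ can have a hole around the excluded cluster, and the bounded face bordering that hole is not a quartet. Coherence of every quartet inside $\cI$ therefore does not by itself imply consistency of the $\bY$-product around that longer cycle, so ``admits a consistent assignment'' does not follow. The paper avoids this by not appealing to planarity at all: it shows directly that $\bY_{B_i,B_j}=S^{(k-1)}_{B_i}S^{(k-1)}_{B_j}$ for adjacent good sub-blocks, so $W_{B_i}^{(k-1)}\equiv S^{(k-1)}_{B_i}$ is an explicit solution to all constraints on $\cI$ simultaneously. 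In your write-up, this identity is established later as ``the heart of the proof,'' but you present it as a consequence used only for parts~2 and~3; you need it already to justify part~1 (or, equivalently, to control the one non-quartet face around the excluded region). Related to this, your definition $\sigma_i := W^{(k-1)}_{B_i}S^{(k-1)}_{B_i}$ is awkward: $W^{(k-1)}_{B_i}$ is precisely what part~1 is in the process of constructing, so stating the key identity in terms of $\sigma_i$ makes it look as though you are assuming the sync relation already holds. The clean order of operations is the paper's: first prove $\bY_{B_i,B_j}=S^{(k-1)}_{B_i}S^{(k-1)}_{B_j}$ (depending only on the inductively-given $S^{(k-1)}$ and the observations), then observe that $W_{B_i}^{(k-1)}=\pm S^{(k-1)}_{B_i}$ solves the system, and only afterward define $S^{(k)}_B$ so that $S^{(k)}_B W^{(k-1)}_{B_i}=S^{(k-1)}_{B_i}$.
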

Note that we do not (and cannot) construct $S^{(k)}_B$ and observe that it is used in the analysis but not the construction.  It accounts for the fact that we can only hope to recover the $\btheta_u$ up to a global multiplicative shift.
\begin{proof}[Proof of Claim \ref{claim2}]
We proceed inductively.  In the base case when $k=0$ for $x=B\in \cG^{(0)}$ we may set $S^{(0)}_x = \btheta_x$. With the convention that an empty product is 1 we have that $\tilde{W}_{x}^{(0)}=1$ and so
\[
\btheta_x = S^{(0)}_x \tilde{W}_{x}^{(0)} \, .
\]
Now we assume the claim holds for all $k' < k$ and consider  a good block $B\in \cG^{(k)}$. 

\noindent{\bf 1.} For any good $(k-1)$-level sub-blocks, $B_1\sim B_2$ in $B$
\begin{align}\label{e:edgeSynch}
\bY_{B_1,B_2} &= \mathrm{sign}\left(\sum_{\substack{B_1\ni x \sim y\in B_2}} \tilde{W}_{x}^{(k-1)} \tilde{W}_{y}^{(k-1)}\bY_{xy} \right)\nonumber\\
&=  \mathrm{sign}\left(S^{(k-1)}_{B_1} S^{(k-1)}_{B_2}\sum_{\substack{B_1\ni x \sim y\in B_2}} S^{(k-1)}_{B_1} \tilde{W}_{x}^{(k-1)} S^{(k-1)}_{B_2}\tilde{W}_{y}^{(k-1)} \bY_{xy} \right)
\end{align}
Our inductive hypothesis implies that there are at most $2^{-8}\ell_{k-1}$ vertices $x$ in this sum with $S^{(k-1)}_B \tilde{W}_{x}^{(k-1)}\neq \btheta_x$, thus
\begin{align}
\sum_{\substack{B_1\ni x \sim y\in B_2}} S^{(k-1)}_{B_1} \tilde{W}_{x}^{(k-1)} S^{(k-1)}_{B_2}\tilde{W}_{y}^{(k-1)} \bY_{xy} \geq \sum_{\substack{B_1\ni x \sim y\in B_2}} \btheta_x \btheta_y \bY_{xy} - 4\cdot 2^{-8}\ell_{k-1}\, ,
\end{align}
and so since $\cA^{(k-1)}(B_1,B_2)$ holds,
\begin{align}
\sum_{\substack{B_1\ni x \sim y\in B_2}} S^{(k-1)}_{B_1} \tilde{W}_{x}^{(k-1)} S^{(k-1)}_{B_2}\tilde{W}_{y}^{(k-1)} \bY_{xy} \geq \left(\frac{9}{10} - 4\cdot 2^{-8}\right)\ell_{k-1} > 0\, .
\end{align}
Combining with equation~\eqref{e:edgeSynch} we have that
\begin{align}
\bY_{B_1,B_2}  = \mathrm{sign}\left(S^{(k-1)}_{B_1} S^{(k-1)}_{B_2}\right).
\end{align}
It follows that every quartet of good sub-blocks is coherent.  If all of the $(k-1)$-level quartets of sub-blocks of $B$ are coherent then there are exactly two assignments of $W_{B_i}^{(k-1)}$ (related by a multiplicative factor of $-1$) satisfying $W_{B_1}^{(k-1)}W_{B_2}^{(k-1)}=\bY_{B_1,B_2}$.  If there is one or more incoherent quartet, this must include the single bad sub-block.  The sub-blocks in $\cI$ are good and there exist two assignments  satisfying $W_{B_1}^{(k)}W_{B_2}^{(k)}=\bY_{B_1,B_2}$ for all $B_1,B_2 \in \cI$ which are,
\begin{equation}\label{epairOf Solutions}
W_{B_i}^{(k-1)} \equiv S^{(k-1)}_{B_i}\quad \hbox{or} \quad W_{B_i}^{(k-1)} \equiv -S^{(k-1)}_{B_i}.
\end{equation}
In either case the procedure will construct $W_{B_i}^{(k)}$ satisfying~\eqref{epairOf Solutions} on $\cI$ and $\cH^{(k)}_{B}$ holds.  We set $S^{(k)}_{B}$ so that
\[
S^{(k)}_{B} W_{B_i}^{(k-1)} \equiv S^{(k-1)}_{B_i}.
\]

\noindent{\bf 2.} To verify condition~\eqref{e:recoveryS} we see that for $x\in B_i$,
\[
S^{(k)}_B \tilde{W}_{x}^{(k)} = S^{(k)}_B W_{B_i}^{(k)} \tilde{W}_{x}^{(k-1)} = S^{(k-1)}_{B_i}\tilde{W}_{x}^{(k-1)} =\btheta_x \, ,
\]
where the last equality used the inductive hypothesis. 

\noindent{\bf 3.} It remains to check the condition on the boundary of $B$ adjacent to some good block $B'$. Since any sub-block in $\cI^c$ must be in a quartet with a bad sub-block, there are at most 3 on any side of $B$.  Thus, summing over sub-blocks $B_i$ of $B$ we have that
\begin{align*}
\sum_{\substack{x\in B \cap \partial B'}} S^{(k)}_B \tilde{W}_{x}^{(k)}\btheta_x 
&= \sum_{B_i:B_i\sim B'}\sum_{\substack{x\in B_i \cap \partial B'}} S^{(k)}_B \tilde{W}_{x}^{(k)}\btheta_x \\
&\geq \sum_{B_i\in \cI:B_i\sim B'}\sum_{\substack{B_i\ni x \sim y\in B'}} S^{(k)}_B \tilde{W}_{x}^{(k)}\btheta_x -3\ell_{k-1}\\
&\geq (1-2^{-8} +2^{-10(k-1)} )\ell_{k-1}(2^{20k}-3)-3\ell_{k-1}\\
&\geq (1-2^{-8} +2^{-10k})\ell_{k}
\end{align*}
which establishes~\eqref{e:goodEdge}.
\end{proof}
By the proceeding claim, if $u$ and $v$ are in the same $k$-level block on the event
\[
\cJ_{uv}^{(k)} = \bigcap_{j=0}^{k-1} \Big\{\{D_{u,j},D_{v,j} \in\cG^{(k)}\}\cap \{D_{u,j},D_{v,j} \in \cI\}\Big\}
\]
we have that
\begin{equation}\label{e:recovery}
\tilde{W}_{u}^{(k-1)}\tilde{W}_{v}^{(k-1)} = \btheta_u  S^{(k)}_B \btheta_v  S^{(k)}_B =\btheta_u\btheta_v \, .
\end{equation}
so $\tilde{W}_{u}^{(k-1)}\tilde{W}_{v}^{(k-1)}$ correctly recovers $\btheta_u\btheta_v$.  A sufficient condition for $D_{u,k}\in \cG^{(k)}\cap \cI$ is that $D_{u,k}$ and the 8 sub-blocks surrounding it are all good.  Thus
\[
\prob(\cJ_{uv}^{(k)}) \geq 1 - \sum_{k'\geq 1} 18\,\prob(D_{u,k}\in \cG^{(k)}) \geq 1 - 18\sum_{k'\geq 1} 2^{-200k-200}\geq \frac{9}{10}.
\]
Thus
\[
\prob(\tilde{W}_{u}^{(k-1)}\tilde{W}_{v}^{(k-1)} = \btheta_u\btheta_v) \geq \frac{8}{10}
\]
and so the success probability of recovery is at least $\frac{8}{10}> \frac12$ independent of the distance between $u$ and $v$ which completes the proof of Theorem~\ref{t:Z2NoiseZ2}.

\section*{Acknowledgements}

This work was partially supported by the following grants:  NSF CAREERAward CCF-1552131, NSF CSOI CCF-0939370 (E.A.); NSF CCF-1319979, NSF
DMS-1613091 (A.M); NSF CCF-1553751 and a Sloan Research Fellowship (N.S).
We thank the American Institute of Mathematics (San Jose, CA) where part of this work was carried out.

\appendix

\section{Proof of Eq.~(\ref{eq:Unbiased}) for $\O(m)$ synchronization}
\label{app:Lambda}

Here we prove the remark that --under the model of  Example \ref{ex:Orth}-- $\E\{\bY_{xy}|\btheta\} = \lambda(\sigma^2)\, \btheta_x^{-1}\btheta_y$.
Fixing for simplicity $x=1$, $y=2$ and dropping the indices $x,y$ unless necessary, we have $\bY = \tbU\tbV^{\sT}$
where $\tbX= \btheta_1^{-1}\btheta_2+\sigma\bZ$ has singular value decomposition $\tbX =\tbU\bSigma\tbV^{\sT}$. 

Let $\bX = \btheta_1\tbX\btheta_2^{-1} = \bU\bSigma\bV^{\sT}$. Our claim is equivalent to $\E\{\bU\bV^{\sT}|\btheta\} =\lambda(\sigma^2)\id$.  
By rotational  invariance  of the Gaussian distribution, we have $\bX = \id+\sigma \, \tilde{\bG}$ for $(G_{ij})_{1\le i,j\le m}\sim_{iid}\normal(0,1)$
or --equivalently-- $\bX = \bQ^{\sT}(\id+\sigma \, \bG)\bQ$ for any $\bQ$ in $\O(m)$. Using the last representation,
$\bE = \E\{\bU\bV^{\sT}|\btheta\} = \E\{\bQ^{\sT}\bU\bV^{\sT}\bQ|\btheta\}$ for $\id+\sigma \, \bG =\bU\bSigma\bV^{\sT}$. This implies that
$\bQ^{\sT}\bE\bQ=\bE$ for any orthogonal matrix $\bQ$, which can hold only if $\bE =\lambda\id$ for some scalar $\lambda$.

Continuity and the limit values of $\lambda(\sigma^2)$ are straightforward.

\section{Proof of Theorem \ref{thm:Noisy}}

Let $p \equiv 1- \inf_{\by,\btheta_0}  q(\by|\btheta_0)$. 
We can write the conditional probability density $q(\by|\btheta_0)$ as
\begin{align}
q(\by|\btheta_0) = (1-p)+p\, q_*(\by|\btheta)\, .
\end{align}
Hence observations $(\bY_{xy})_{(x,y)\in E}$ can be generated as follows. First draw independent random
variables $(U_{xy})_{(x,y)\in E}\sim_{iid}{\rm Bernoulli}(p)$. Then, for each $(x,y) \in E$ such that $U_{xy}=1$, draw an independent 
observation $\bY_{xy}\sim q_*(\,\cdot\, |\btheta_x^{-1}\btheta_y)$. For $(x,y) \in E$ such that $U_{xy}=0$,
draw $\bY_{xy}$ according to the Haar measure.

To upper bound the total variation distance in Eq~(\ref{eq:WeakRecovery}) we consider the easier problem 
in which instead of $\bY$, we are given all the Bernoulli variables $U = (U_{xy})_{(x,y)\in E}$ and, for each $(x,y)\in E$
such that $U_{xy}=1$ we are given the group difference $\bD_{xy}=\btheta_x^{-1}\btheta_y$. Denoting by $\bD=\{\bD_{xy}\}_{(x,y)\in E, U_{xy}=1}$,
we then have
\begin{align}
\Big\|\prob\big(\btheta_xT_{xy}(\bY)\btheta_y^{-1}\in \;\cdot\;\big)- \prob_{{\rm Haar}}\big( \; \cdot\;\big)\Big\|_{\sTV} 
\le \sup_{\tT_{xy}}\Big\|\prob\big(\btheta_xT_{xy}(U;\bD)\btheta_y^{-1}\in \;\cdot\;\big)- \prob_{{\rm Haar}}\big( \; \cdot\;\big)\Big\|_{\sTV} \, .
\end{align}
Consider the percolation process defined by the variables $U$ (whereby
edge $(x,y)\in E$ is open if $U_{xy}=1$), and denote by  $x\conn y$ the event that $x$ and $y$ are
in the same percolation cluster. If $x$ and $y$ are not in the same percolation cluster, then the conditional distribution
of $\btheta_x^{-1}\btheta_y$ conditional on $U;\bD$ is uniformly on $\Group$.  This implies that
\begin{align}
\Big\|\prob\big(\btheta_xT_{xy}(\bY)\btheta_y^{-1}\in \;\cdot\;\big)- \prob_{{\rm Haar}}\big( \; \cdot\;\big)\Big\|_{\sTV} \le \prob(x\conn y)\, .
\end{align}
For $p<p_c(d)$, the right hand side goes to $0$ as $\|x-y\|\to \infty$ \cite{grimmett1989percolation}, which yields the claim.

\section{Proof of Theorem \ref{t:Z2ctsNonRecon}}

For $s\in \reals$ and $Z\sim g(\,\cdot\,)$,  we define
\begin{align}
\psi(s) = \|\prob\big(Z e^{i s} \in \,\cdot\,\big)- \prob\big(Z  \in \,\cdot\,\big) \|^2_{L^2(g)} &= \int_0^{2\pi}\left(\frac{g(e^{i (t-s)})}{g(e^{ i (t)})} - 1 \right)^2 g(e^{ i (t)}) \; \de t  \nonumber\\
&=  \int_0^{2\pi}\left(\frac{g(e^{i (t-s)})}{g(e^{i (t)})}\right)^2 g(e^{i (t)}) \;\de t -1 \, .
\end{align}
Note that $\psi(s)$ is twice differentiable, nonnegative and that $\psi(0)=0$ so $\psi'(s)=0$ and for some $\kappa=\kappa(g)>0$,
\begin{align}
|\psi(s)| \leq \kappa |s|^2.
\end{align}

Let $u,v\in \integers^2$ with $L=\|u-v\|_2$, and define the function $h:\integers^2\to \reals$ by
\begin{align}
h(x) = 1-\frac{\log\big(1+\min(\|x-u\|_2;L)\big)}{\log(L+1)}\, .
\end{align}
Note that $h(u)=1$, $h(v) = 0$ and $h(x)=0$ for $\|x-u\|_2\ge L$. Fix $\btheta \in \U1^{\integers^2}$, $s\in [0,2\pi)$ and define $\btheta^{(s)}$
by letting $\btheta_x^{(s)} = e^{is}\, \btheta_x$. Denote by $\prob_{\btheta}(\bY\in \,\cdot\, )$ the conditional distribution of the observations given hidden 
variables $\btheta$.
We then have, for a constant $C$,
\begin{align}
\Big\|\prob_{\btheta^{(s)}}\big(\bY \in \,\cdot\,\big)- \prob_{\btheta}\big(\bY \in \,\cdot\,\big)\Big\|^2_{\sTV}
&\leq \Big\|\prob_{\btheta^{(s)}}\big(\bY \in \,\cdot\,\big)- \prob_{\btheta}\big(\bY \in \,\cdot\,\big)\Big\|^2_{L^2}\\
&=\prod_{(x,y)\in E} \bigg( 1+ \Big\|\prob_{\nu_s}\big(\bY_{xy} \in \,\cdot\,\big)- \prob_{\nu_0}\big(\bY_{xy} \in \,\cdot\,\big)\Big\|^2_{L^2}\bigg) -1\\
&=\prod_{(x,y)\in E} \big( 1+ \psi\big(s(h(x)-h(y))\big)\big) -1\\
&\leq \prod_{(x,y)\in E} \big( 1+ \kappa\, s^2 \, |h(x)-h(y)|^2)\big) -1\\
&\leq \prod_{\substack{(x, y)\in E \\ \|x-u\| \leq L}} \left\{ 1+ \frac{C}{(1+\|x-u\|^2)\log^2 L}\right\} -1\\
&=O(1/\log L).
\end{align}
Taking expectation over $s$ uniformly random in $[0,2\pi)$ (denoted by $\rE_s$), we have, for any measurable set $B$, 
\begin{align}
\rE_s\prob_{\btheta^{(s)}}\big(\btheta_uT_{uv}(\bY)\btheta_v \in B\big) =\rE_s\prob_{\btheta^{(s)}}\big(\btheta^{(s)}_uT_{uv}(\bY)\btheta^{(s),-1}_v \in e^{is}\, B\big)\, ,
\end{align}
and therefore
\begin{align}
\Big|\rE_s\prob_{\btheta^{(s)}}\big(\btheta^{(s)}_uT_{uv}(\bY)\btheta^{(s),-1}_v \in e^{is}\, B\big)-\prob_{\btheta}\big(\btheta_uT_{uv}(\bY)\btheta_v^{-1} \in B\big)\Big| = O(\log(1/L))\, .
\end{align}
We next take expectation with respect to $(\btheta_x)_{x\in\integers^2}$ i.i.d. uniform in $\U1$. Note that under this distribution, also 
$(\btheta_x)_{x\in\integers^2}$ are i.i.d. uniform in $\U1$. Letting $\prob(\,\cdot\,) = \E\prob_{\btheta}(\,\cdot\,)$, we have
\begin{align}
\Big|\rE_s\prob\big(\btheta_uT_{uv}(\bY)\btheta^{-1}_v \in e^{is}\, B\big)-\prob\big(\btheta_uT_{uv}(\bY)\btheta_v^{-1} \in B\big)\Big| = O(\log(1/L))\, .
\end{align}
For any fixed $B$, $\bxi$, $\rP_s(\bxi\in e^{is}B) = \prob_{Haar}(B)$ and hence we get 
\begin{align}
\Big|\prob_{\btheta}\big(\btheta_uT_{uv}(\bY)\btheta_v^{-1} \in B\big)-\prob_{{\rm Haar}}(B)\Big| = O(\log(1/L))\, .
\end{align}
This proves the impossibility of weak recovery.

\bibliographystyle{amsalpha}

\begin{thebibliography}{GHLDB85}

\bibitem[Abb17]{abbe2017community}
Emmanuel Abbe, \emph{Community detection and stochastic block models: recent
  developments}, {\sf arXiv:1703.10146} (2017).

\bibitem[ABBS14]{abbe2014decoding}
Emmanuel Abbe, Afonso~S Bandeira, Annina Bracher, and Amit Singer,
  \emph{Decoding binary node labels from censored edge measurements: Phase
  transition and efficient recovery}, IEEE Transactions on Network Science and
  Engineering \textbf{1} (2014), no.~1, 10--22.

\bibitem[BPP{\etalchar{+}}98]{benjamini1998unpredictable}
Itai Benjamini, Robin Pemantle, Yuval Peres, et~al., \emph{Unpredictable paths
  and percolation}, The Annals of Probability \textbf{26} (1998), no.~3,
  1198--1211.

\bibitem[BSAB14]{boumal2014cramer}
Nicolas Boumal, Amit Singer, P-A Absil, and Vincent~D Blondel,
  \emph{Cram{\'e}r--rao bounds for synchronization of rotations}, Information
  and Inference \textbf{3} (2014), no.~1, 1--39.

\bibitem[BSS13]{bandeira2013cheeger}
Afonso~S Bandeira, Amit Singer, and Daniel~A Spielman, \emph{A cheeger
  inequality for the graph connection laplacian}, SIAM Journal on Matrix
  Analysis and Applications \textbf{34} (2013), no.~4, 1611--1630.

\bibitem[CMG13]{chatterjee2013efficient}
Avishek Chatterjee and Venu Madhav~Govindu, \emph{Efficient and robust
  large-scale rotation averaging}, Proceedings of the IEEE International
  Conference on Computer Vision, 2013, pp.~521--528.

\bibitem[CSC12]{cucuringu2012eigenvector}
Mihai Cucuringu, Amit Singer, and David Cowburn, \emph{Eigenvector
  synchronization, graph rigidity and the molecule problem}, Information and
  Inference \textbf{1} (2012), no.~1, 21--67.

\bibitem[Geo11]{georgii2011gibbs}
Hans-Otto Georgii, \emph{Gibbs measures and phase transitions}, vol.~9, Walter
  de Gruyter, 2011.

\bibitem[GHLDB85]{georges1985exact}
Antoine Georges, David Hansel, Pierre Le~Doussal, and Jean-Philippe Bouchaud,
  \emph{Exact properties of spin glasses. ii. nishimori's line: new results and
  physical implications}, Journal de Physique \textbf{46} (1985), no.~11,
  1827--1836.

\bibitem[Gri89]{grimmett1989percolation}
Geoffrey Grimmett, \emph{Percolation?}, Springer, 1989.

\bibitem[HB01]{hightower2001location}
Jeffrey Hightower and Gaetano Borriello, \emph{Location systems for ubiquitous
  computing}, Computer \textbf{34} (2001), no.~8, 57--66.

\bibitem[HM82]{horiguchi1982existence}
T~Horiguchi and T~Morita, \emph{{Existence of the ferromagnetic phase in a
  random-bond Ising model on the square lattice}}, Journal of Physics A:
  Mathematical and General \textbf{15} (1982), no.~2, L75.

\bibitem[HTPV07]{hasenbusch2007critical}
Martin Hasenbusch, Francesco~Parisen Toldin, Andrea Pelissetto, and Ettore
  Vicari, \emph{{Critical behavior of the three-dimensional $\pm J$ Ising model
  at the paramagnetic-ferromagnetic transition line}}, Physical Review B
  \textbf{76} (2007), no.~9, 094402.

\bibitem[Iba99]{iba1999nishimori}
Yukito Iba, \emph{The nishimori line and bayesian statistics}, Journal of
  Physics A: Mathematical and General \textbf{32} (1999), no.~21, 3875.

\bibitem[JM13]{javanmard2013localization}
Adel Javanmard and Andrea Montanari, \emph{Localization from incomplete noisy
  distance measurements}, Foundations of Computational Mathematics \textbf{13}
  (2013), no.~3, 297--345.

\bibitem[JMRT16]{javanmard2016phase}
Adel Javanmard, Andrea Montanari, and Federico Ricci-Tersenghi, \emph{Phase
  transitions in semidefinite relaxations}, Proceedings of the National Academy
  of Sciences \textbf{113} (2016), no.~16, E2218--E2223.

\bibitem[LMX15]{lelarge2015}
Marc Lelarge, Laurent Massouli\'e, and Jiaming Xu, \emph{Reconstruction in the
  labelled stochastic block model}, IEEE Transactions on Network Science and
  Engineering \textbf{2} (2015), no.~4, 152–163.

\bibitem[Mer67]{mermin1967absence}
N~David Mermin, \emph{Absence of ordering in certain classical systems},
  Journal of Mathematical Physics \textbf{8} (1967), no.~5, 1061--1064.

\bibitem[MMM13]{moulon2013global}
Pierre Moulon, Pascal Monasse, and Renaud Marlet, \emph{Global fusion of
  relative motions for robust, accurate and scalable structure from motion},
  Proceedings of the IEEE International Conference on Computer Vision, 2013,
  pp.~3248--3255.

\bibitem[Moh97]{mohar1997some}
Bojan Mohar, \emph{Some applications of laplace eigenvalues of graphs}, Graph
  symmetry, Springer, 1997, pp.~225--275.

\bibitem[Mon08]{andrea2008estimating}
Andrea Montanari, \emph{Estimating random variables from random sparse
  observations}, European Transactions on Telecommunications \textbf{19}
  (2008), no.~4, 385--403.

\bibitem[Moo17]{moore2017computer}
Cristopher Moore, \emph{The computer science and physics of community
  detection: landscapes, phase transitions, and hardness}, {\sf
  arXiv:1702.00467} (2017).

\bibitem[MW66]{mermin1966absence}
N~David Mermin and Herbert Wagner, \emph{Absence of ferromagnetism or
  antiferromagnetism in one-or two-dimensional isotropic heisenberg models},
  Physical Review Letters \textbf{17} (1966), no.~22, 1133.

\bibitem[Nis81]{nishimori1981internal}
Hidetoshi Nishimori, \emph{{Internal energy, specific heat and correlation
  function of the bond-random Ising model}}, Progress of Theoretical Physics
  \textbf{66} (1981), no.~4, 1169--1181.

\bibitem[Nis01]{nishimori2001statistical}
\bysame, \emph{Statistical physics of spin glasses and information processing:
  an introduction}, vol. 111, Clarendon Press, 2001.

\bibitem[OMK10]{oh2010sensor}
Sewoong Oh, Andrea Montanari, and Amin Karbasi, \emph{Sensor network
  localization from local connectivity: Performance analysis for the mds-map
  algorithm}, Information Theory (ITW 2010, Cairo), 2010 IEEE Information
  Theory Workshop on, IEEE, 2010, pp.~1--5.

\bibitem[ON93]{ozeki1993phase}
Y~Ozeki and Hidetoshi Nishimori, \emph{Phase diagram of gauge glasses}, Journal
  of Physics A: Mathematical and General \textbf{26} (1993), no.~14, 3399.

\bibitem[OS15]{ozyesil2015robust}
Onur Ozyesil and Amit Singer, \emph{Robust camera location estimation by convex
  programming}, Proceedings of the IEEE Conference on Computer Vision and
  Pattern Recognition, 2015, pp.~2674--2683.

\bibitem[PTPV09]{parisen2009strong}
Francesco Parisen~Toldin, Andrea Pelissetto, and Ettore Vicari,
  \emph{{Strong-disorder paramagnetic-ferromagnetic fixed point in the
  square-lattice $\pm J$ Ising model}}, Journal of Statistical Physics
  \textbf{135} (2009), no.~5, 1039--1061.

\bibitem[PWBM16]{perry2016message}
Amelia Perry, Alexander~S Wein, Afonso~S Bandeira, and Ankur Moitra,
  \emph{Message-passing algorithms for synchronization problems over compact
  groups}, {\sf arXiv:1610.04583} (2016).

\bibitem[Sin08]{singer2008remark}
Amit Singer, \emph{A remark on global positioning from local distances},
  Proceedings of the National Academy of Sciences \textbf{105} (2008), no.~28,
  9507--9511.

\bibitem[Sin11]{singer2011angular}
\bysame, \emph{Angular synchronization by eigenvectors and semidefinite
  programming}, Applied and computational harmonic analysis \textbf{30} (2011),
  no.~1, 20--36.

\bibitem[SLKZ15]{saade2015}
Alaa Saade, Marc Lelarge, Florent Krzakala, and Zdeborov\'a, \emph{Spectral
  detection in the censored block model}, IEEE International Symposium on
  Information Theory (ISIT), 2015, p.~1184–1188.

\bibitem[SS11]{singer2011three}
Amit Singer and Yoel Shkolnisky, \emph{Three-dimensional structure
  determination from common lines in cryo-em by eigenvectors and semidefinite
  programming}, SIAM journal on imaging sciences \textbf{4} (2011), no.~2,
  543--572.

\bibitem[TDSL00]{tenenbaum2000global}
Joshua~B Tenenbaum, Vin De~Silva, and John~C Langford, \emph{A global geometric
  framework for nonlinear dimensionality reduction}, science \textbf{290}
  (2000), no.~5500, 2319--2323.

\bibitem[Was13]{wasserman2013all}
Larry Wasserman, \emph{All of statistics: a concise course in statistical
  inference}, Springer Science \& Business Media, 2013.

\bibitem[WBS16]{wilson2016rotations}
Kyle Wilson, David Bindel, and Noah Snavely, \emph{When is rotations averaging
  hard?}, European Conference on Computer Vision, Springer, 2016, pp.~255--270.

\bibitem[WS13]{wang2013exact}
Lanhui Wang and Amit Singer, \emph{Exact and stable recovery of rotations for
  robust synchronization}, Information and Inference (2013), iat005.

\end{thebibliography}

\newcommand{\etalchar}[1]{$^{#1}$}
\providecommand{\bysame}{\leavevmode\hbox to3em{\hrulefill}\thinspace}
\providecommand{\MR}{\relax\ifhmode\unskip\space\fi MR }
\providecommand{\MRhref}[2]{%
  \href{http://www.ams.org/mathscinet-getitem?mr=#1}{#2}
}
\providecommand{\href}[2]{#2}

\end{document}